\documentclass[english]{article}
\usepackage[T1]{fontenc}
\usepackage[utf8]{luainputenc}
\usepackage{geometry}
\geometry{verbose,tmargin=2.5cm,bmargin=2.5cm,lmargin=2.5cm,rmargin=2.5cm}
\usepackage{amsthm}
\usepackage{amsmath}
\usepackage{amssymb}
\usepackage{graphicx}
\PassOptionsToPackage{normalem}{ulem}
\usepackage{ulem}

\makeatletter


\theoremstyle{plain}
\newtheorem{thm}{\protect\theoremname}[section]
  \theoremstyle{definition}
  \newtheorem{defn}[thm]{\protect\definitionname}
  \theoremstyle{definition}
  \newtheorem{example}[thm]{\protect\examplename}
  \theoremstyle{remark}
  \newtheorem{rem}[thm]{\protect\remarkname}
  \theoremstyle{plain}
  \newtheorem{cor}[thm]{\protect\corollaryname}
  \theoremstyle{plain}
  \newtheorem{lem}[thm]{\protect\lemmaname}

\usepackage[vcentermath,enableskew]{youngtab}
\usepackage{young}
\usepackage{ytableau}
\usepackage{tikz}
\usepackage{subcaption}
\usepackage{hyperref}

\makeatother

\usepackage{babel}
  \providecommand{\corollaryname}{Corollary}
  \providecommand{\definitionname}{Definition}
  \providecommand{\examplename}{Example}
  \providecommand{\lemmaname}{Lemma}
  \providecommand{\remarkname}{Remark}
\providecommand{\theoremname}{Theorem}

\begin{document}
\global\long\def\pp#1{\mathbf{P}\left(#1\right)}
\global\long\def\ee#1{\mathbf{E}\left[#1\right]}
\global\long\def\norm#1{\left\Vert #1\right\Vert }
\global\long\def\abs#1{\left|#1\right|}
\global\long\def\given#1{\left|\phantom{\frac{}{}}#1\right.}
\global\long\def\ceil#1{\left\lceil #1\right\rceil }
\global\long\def\floor#1{\left\lfloor #1\right\rfloor }
\global\long\def\var{\mathbf{Var}}
\global\long\def\q{\mathbf{Q}}
\global\long\def\cov{\mathbf{Cov}}
\global\long\def\corr{\mathbf{Corr}}
\global\long\def\e{\mathbf{E}}
\global\long\def\one{\mathtt{1}}
\global\long\def\p{\mathbf{P}}

\global\long\def\bA{\mathbb{A}}
\global\long\def\bB{\mathbb{B}}
\global\long\def\bC{\mathbb{C}}
\global\long\def\bD{\mathbb{D}}
\global\long\def\bE{\mathbb{E}}
\global\long\def\bF{\mathbb{F}}
\global\long\def\bG{\mathbb{G}}
\global\long\def\bH{\mathbb{H}}
\global\long\def\bI{\mathbb{I}}
\global\long\def\bJ{\mathbb{J}}
\global\long\def\bK{\mathbb{K}}
\global\long\def\bL{\mathbb{L}}
\global\long\def\bM{\mathbb{M}}
\global\long\def\bN{\mathbb{N}}
\global\long\def\bO{\mathbb{O}}
\global\long\def\bP{\mathbb{P}}
\global\long\def\bQ{\mathbb{Q}}
\global\long\def\bR{\mathbb{R}}
\global\long\def\bS{\mathbb{S}}
\global\long\def\bT{\mathbb{T}}
\global\long\def\bU{\mathbb{U}}
\global\long\def\bV{\mathbb{V}}
\global\long\def\bW{\mathbb{W}}
\global\long\def\bX{\mathbb{X}}
\global\long\def\bY{\mathbb{Y}}
\global\long\def\bZ{\mathbb{Z}}

\global\long\def\cA{\mathcal{A}}
\global\long\def\cB{\mathcal{B}}
\global\long\def\cC{\mathcal{C}}
\global\long\def\cD{\mathcal{D}}
\global\long\def\cE{\mathcal{E}}
\global\long\def\cF{\mathcal{F}}
\global\long\def\cG{\mathcal{G}}
\global\long\def\cH{\mathcal{H}}
\global\long\def\cI{\mathcal{I}}
\global\long\def\cJ{\mathcal{J}}
\global\long\def\cK{\mathcal{K}}
\global\long\def\cL{\mathcal{L}}
\global\long\def\cM{\mathcal{M}}
\global\long\def\cN{\mathcal{N}}
\global\long\def\cO{\mathcal{O}}
\global\long\def\cP{\mathcal{P}}
\global\long\def\cQ{\mathcal{Q}}
\global\long\def\cR{\mathcal{R}}
\global\long\def\cS{\mathcal{S}}
\global\long\def\cT{\mathcal{T}}
\global\long\def\cU{\mathcal{U}}
\global\long\def\cV{\mathcal{V}}
\global\long\def\cW{\mathcal{W}}
\global\long\def\cX{\mathcal{X}}
\global\long\def\cY{\mathcal{Y}}
\global\long\def\cZ{\mathcal{Z}}

\global\long\def\sA{\mathscr{A}}
\global\long\def\sB{\mathscr{B}}
\global\long\def\sC{\mathscr{C}}
\global\long\def\sD{\mathscr{D}}
\global\long\def\sE{\mathscr{E}}
\global\long\def\sFA{\mathscr{F}}
\global\long\def\sG{\mathscr{G}}
\global\long\def\sH{\mathscr{H}}
\global\long\def\sI{\mathscr{I}}
\global\long\def\sJ{\mathscr{J}}
\global\long\def\sK{\mathscr{K}}
\global\long\def\sL{\mathscr{L}}
\global\long\def\sM{\mathscr{M}}
\global\long\def\sN{\mathscr{N}}
\global\long\def\sO{\mathscr{O}}
\global\long\def\sP{\mathscr{P}}
\global\long\def\sQ{\mathscr{Q}}
\global\long\def\sR{\mathscr{R}}
\global\long\def\sS{\mathscr{S}}
\global\long\def\sT{\mathscr{T}}
\global\long\def\sU{\mathscr{U}}
\global\long\def\sV{\mathscr{V}}
\global\long\def\sW{\mathscr{W}}
\global\long\def\sX{\mathscr{X}}
\global\long\def\sY{\mathscr{Y}}
\global\long\def\sZ{\mathscr{Z}}

\global\long\def\tr{\text{Tr}}
\global\long\def\re{\text{Re}}
\global\long\def\im{\text{Im}}
\global\long\def\supp{\text{supp}}
\global\long\def\sgn{\text{sgn}}
\global\long\def\d{\text{d}}
\global\long\def\dist{\text{dist}}
\global\long\def\span{\text{span}}
\global\long\def\ran{\text{ran}}
\global\long\def\ball{\text{ball}}
\global\long\def\Dim{\textnormal{Dim}}
\global\long\def\ai{\text{Ai}}
\global\long\def\occ{\text{Occ}}
\global\long\def\sh{\text{sh}}

\global\long\def\To{\Rightarrow}
\global\long\def\half{\frac{1}{2}}
\global\long\def\oo#1{\frac{1}{#1}}
\newcommand{\slfrac}[2]{\left.#1\middle/#2\right.}

\global\long\def\al{\alpha}
\global\long\def\be{\beta}
\global\long\def\ga{\gamma}
\global\long\def\Ga{\Gamma}
\global\long\def\de{\delta}
\global\long\def\De{\Delta}
\global\long\def\ep{\epsilon}
\global\long\def\ze{\zeta}
\global\long\def\et{\eta}
\global\long\def\th{\theta}
\global\long\def\Th{\Theta}
\global\long\def\ka{\kappa}
\global\long\def\la{\lambda}
\global\long\def\La{\Lambda}
\global\long\def\rh{\rho}
\global\long\def\si{\sigma}
\global\long\def\ta{\tau}
\global\long\def\ph{\phi}
\global\long\def\Ph{\Phi}
\global\long\def\vp{\varphi}
\global\long\def\ch{\chi}
\global\long\def\ps{\psi}
\global\long\def\Ps{\Psi}
\global\long\def\om{\omega}
\global\long\def\Om{\Omega}
\global\long\def\Si{\Sigma}

\global\long\def\dequal{\stackrel{d}{=}}
\global\long\def\pto{\stackrel{\p}{\to}}
\global\long\def\asto{\stackrel{\text{a.s.}}{\to}}
\global\long\def\dto{\stackrel{\text{d.}}{\to}}
\global\long\def\ld{\ldots}
\global\long\def\di{\partial}

\global\long\def\shR{\text{Shapes}_{R}}
\global\long\def\shL{\text{Shapes}_{L}}
\global\long\def\shZ{\text{Shape}_{0}}

\global\long\def\siR{\text{Sizes}_{R}}
\global\long\def\siL{\text{Sizes}_{L}}
\global\long\def\siZ{\text{Size}_{0}}

\newcommand{\aaa}{1.3}
\newcommand{\thk}{ {\scriptstyle \slfrac{\th}{k}} }

\title{Decorated Young Tableaux and the Poissonized Robinson-Schensted Process}
\author{Mihai Nica {\footnote{ Courant Institute of Mathematical Sciences, New York University
251 Mercer Street, New York, N.Y. 10012-1185, \href{mailto:nica@cims.nyu.edu}{nica@cims.nyu.edu}}}}
\maketitle
\begin{abstract}
We introduce an object called a decorated Young tableau which can equivalently be viewed as a continuous time trajectory of Young diagrams or as a non-intersecting line ensemble. By a natural extension of the Robinson-Schensted correspondence, we create a random pair of decorated Young tableaux from a Poisson point process in the plane, which we think of as a stochastic process in discrete space and continuous time. By using only elementary techniques and combinatorial properties, we identify this process as a Schur process and show it has the same law as certain non-intersecting Poisson walkers. 

\end{abstract}

\section{Introduction}

The Poissonized Plancherel measure is a one parameter family of measures
on Young diagrams. For fixed $\th$, this is a mixture of the classical
Plancherel measures by Poisson weights. This mixture has nice properties
that make it amenable to analysis, see for instance \cite{Borodin00asymptoticsof}
and \cite{Johansson01discreteorthogonal}. One way this measure is
obtained is to take a unit rate Poisson point process in the square
$[0,\th]\times[0,\th]$, then interpret the collection of points as
a permutation, and finally apply the Robinson-Schensted (RS) correspondence.
The RS correspondence gives a pair of Young tableaux of the same shape.
The law of the shape of the Young tableaux constructed in this way
has the Poissonized Plancherel measure. Other than the shape, the
information inside the tableaux themselves are discarded in this construction.
This construction has many nice properties: for example, by the geometric
construction of the RS correspondence due to Viennot (see for example
\cite{0387950672} for details), this shows that the maximum number
of Poisson points an up-right path can pass through has the distribution
of the length of the first row of the Poissonized Plancherel measure.
One can use this to tackle problems like the longest increasing subsequence
problem.

In this article, we extend the above construction slightly in order
to keep the information in the Young tableaux that are generated by
the RS algorithm; we do not discard the information
in the tableaux. As a result, we get a slightly richer random object
which we call the Poissonized Robinson-Schensted process. This object
can be interpreted in several ways. If one views the object as a continuous
time Young diagram valued stochastic process, then its fixed time marginals are exactly
the Poissonized Plancherel measure. Moreover, the joint distribution
at several times form a Schur process as defined in \cite{Okounkov01correlationfunction}. The proof uses only simple properties of the RS correspondence and elementary probabilistic arguments. The model is defined in Section 2 and its distribution is characterized in Section 3.

We also show that the process itself is a special case of stochastic
dynamics related to Plancherel measure studied in \cite{Borodin_stochasticdynamics}. 
Unlike the construction from \cite{Borodin_stochasticdynamics}, our methods in this article do not rely on machinery from representation theory. Instead, the proof goes by first finding the multi-time distribution in terms of Poisson probability mass functions using elementary techniques from probability and combinatorics. Only after this, we identify this in terms of a Schur process. The derivation of the distribution does not rely on this previous theory.  The connection here allow us to immediately see asymptotics for the
model, in particular it converges to the Airy-2 line ensemble under
the correct scaling. This is discussed in Section 4. 

It is also possible to obtain the Poissonized RS process as a limit of a discrete time Young diagram process in a natural way. Instead of starting with a Poisson point process, one instead starts with a point process on a lattice so that the number of points at each site has a geometric distribution. This model was first considered by Johansson in Section 5 of \cite{JoDPP}, in particular see his Theorem 5.1. Again, the approach we take in this article uses only elementary techniques from probability and combinatorics which is in contrast to the analytical methods used in \cite{JoDPP}. This is discussed in Section 5.

\subsection{Notation and Background}
\label{sec:notation}
We very briefly go over the definitions/notations used here. For more
details, see \cite{StanleyVol2} or \cite{0387950672}.

We denote by $\bY$ the set of Young diagrams. We think of a Young
diagram $\la\in\bY$ as a partition $\la=\left(\la_{1},\la_{2},\ld\right)$
where $\la_{i}$ are weakly decreasing and with finitely many non-zero
entries. We can equivalently think of each $\la\subset\bN^{2}$ as
a collection of stacked unit boxes by $\left(i,j\right)\in\la\iff j\leq\la_{i}$.
We denote by $\abs{\la}=\sum_{i=1}^{n}\la_{i}$ the total number of
boxes, or equivalently the sum of the row lengths. We will sometimes
also consider skew tableaux, which are the collection of boxes one
gets from the difference of two Young diagrams $\la\backslash\mu$.

A standard Young tableau $T$ can be thought of as a Young diagram $\la$
whose boxes have been filled with the numbers $1,2,\ld,\abs{\la}$,
so that the numbers are increasing in any row and in any column. We
call the diagram $\la$ in this case the shape of the tableau, and
denote this by $\sh(T)$. We denote by $T(i,j)$ the entry written
in the box at location $i,j$. We will also use the notation $\dim(\la)$
to denote the number of standard Young tableau of shape $\la$. This
is called the ``dimension'' since this is also the dimension of
the the irreducible representations of the symmetric group $S\left(\abs{\la}\right)$
associated with $\la$. 

In the above notation the Poissonized Plancherel Measure of parameter
$\th^2$ is:
\[
\p_{\th}\left(\la\right)=e^{-\th^{2}}\left(\frac{\th^{\abs{\la}}\dim(\la)}{\abs{\la}!}\right)^{2}.
\]

The Robinson-Schensted (RS) correspondence is a bijection from the
symmetric group $S_{n}$ to pairs of standard Young tableaux of the
same shape of size $\abs{\sh(T)}=n$ (See \cite{StanleyVol2} Section 7.11 for details on this bijection) We will sometimes refer to this
here as the ``ordinary'' RS correspondence, not to diminish the
importance of this, but to avoid confusion with a closely related
map we introduce called the ``decorated RS correspondence''. 

We will also make reference to the Schur symmetric functions $s_{\la}(x_{1},\ld)$,
and the skew Schur symmetric functions $s_{\la/\mu}(x_{1},\ld)$ as
they appear in \cite{StanleyVol2} or \cite{0387950672} . A specialization
is a homomorphism from symmetric functions to complex numbers. We
denote by $f(\rho)$ the image of the function $f$ under the specialization
$\rho$. We denote by $\rho_{t}$ the Plancherel specialization (also
known as exponential or ``pure gamma'' specialization) that has $h_{n}(\rho_{t})=\frac{t{}^{n}}{n!}$
for each $n\in\bN$. This is a Schur positive specialization, in the
sense that $s_{\la}\left(\rho_{t}\right)\geq0$ always, and moreover
there is an explicit formula for $s_{\la}\left(\rho_{t}\right)$ in
terms of the number of Young tableaux of shape $\la$:

\begin{equation} \label{schurid}
s_{\la}(\rho_{t})=\dim(\la)\frac{t^{\abs{\la}}}{\abs{\la}!}.
\end{equation}

\section{Decorated Young Tableaux}
\begin{defn}
\label{decoratedYTdef} A \textbf{decorated} \textbf{Young tableau
}is a pair $\tilde{T}=(T,(t_{1},\ld,t_{\abs{\sh(T)}}))$ where $T$
is a standard Young tableau and $0\leq t_{1}<\ld<t_{\abs{\sh(T)}}$
is an increasing list of non-negative numbers whose length is equal
to the size of the tableau. We refer to the list $(t_{1},\ld,t_{\abs{\sh(T)}})$
as the \textbf{decorations} of the tableau. We represent this graphically
when drawing the tableau by recording the number $t_{T(i,j)}$ in
the box $(i,j)$ . \end{defn}
\begin{example}
\label{YTexample}
The decorated Young Tableau: $$ \left( \young(124,35,6), \left(0.02,0.03,0.05,0.07,0.11,0.13 \right) \right), $$
is represented as:
\begin{center}
\ytableausetup{centertableaux,boxsize=2.5em} 
\begin{ytableau} 
\stackrel{1}{0.02} & \stackrel{2}{0.03} & \stackrel{4}{0.07} \\
\stackrel{3}{0.05} & \stackrel{5}{0.11} \\
\stackrel{6}{0.13} 
\end{ytableau}.
\end{center}\end{example}
\begin{rem}
\label{otherwaytothink}Since the decorations are always sorted, we
see that from the above diagram one could recover the entire decorated
tableau without the labels ``1'', ``2'' written in the tableau.
In other words, one could equally well think of a decorated Young
tableau as a map $\tilde{T}:\sh(T)\to\bR_{+},$ so that $\tilde{T}$
is increasing in each column and in each row. Having $\tilde{T}=(T,(t_{1},\ld,t_{\abs{\sh(T)}}))$
will be slightly more convenient for our explanations here, and particularly
to relate the model to previous work.\end{rem}
\begin{defn}
\label{YDprocess}A decorated Young tableau can also be thought of
as a trajectory of Young \uline{diagrams} evolving in continuous
time. The \textbf{Young diagram process} of the decorated Young tableau
$\tilde{T}=(T,(t_{1},\ld,t_{\abs{\sh(T)}}))$ is a map $\la_{\tilde{T}}:\bR_{+}\to\bY$
defined by
\[
\la_{\tilde{T}}(t)=\left\{ (i,j):\ t_{T(i,j)}\leq t\right\} \in\bY.
\]
One can also think about this as follows: the process starts with
$\la(0)=\emptyset$, and then it gradually adds boxes one by one.
The decoration $t_{T(i,j)}$ is the time at which the box $(i,j)$
is added. The fact that $T$ is a standard Young tableau ensures that
$\la(t)$ is indeed a Young diagram at every time $t$. Notice that
the Young diagram process for a decorated Young tableau is always
increasing $\la(t_{1})\subset\la(t_{2})$ whenever $t_{1}\leq t_{2}$,
and it can only increase by at most one box at a time $\lim_{\ep\to0}\abs{\la(t+\ep)-\la(t)}\leq1$.
Moreover, given any continuous time sequence of Young diagrams evolving
in this way we can recover the decorated Young tableau: if the $k$-th
box added to the sequence is the box $(i,j)$ and it is added at time
$s$, then put $T(i,j)=k$ and $t_{k}=s$ 
\end{defn}

\begin{defn}
\label{nonintersectingLineEnsemble}A decorated Young tableau can
also be thought of as an ensemble of non-intersecting lines. The \textbf{non-intersecting
line ensemble }of the decorated Young tableau $\tilde{T}=(T,(t_{1},\ld,t_{\abs{\sh(\la)}}))$
is a map $M_{\tilde{T}}:\bN\times\bR_{+}\to\bZ$ defined by:
\begin{eqnarray*}
M_{\tilde{T}}(i;t) & = & \la_{i}(t)-i\\
 & = & \abs{\left\{ j:t_{T(i,j)}\leq t\right\} }-i,
\end{eqnarray*}
where $\la_{\tilde{T}}(t)=\left(\la_{1}(t),\ld\right)$ is the Young
diagram process of $\tilde{T}$. The index $i$ is the label of the particle, and the variable $t$ measures the time along the trajectory. The lines $M_{\tilde{T}}(i;t)$ are
non-intersecting in the sense that $M_{\tilde{T}}(i;t)<M_{\tilde{T}}(j;t)$
for $i<j$ and for every $t\in\bR_{+}$. This holds since $\la_{\tilde{T}}(t)\in\bY$
is a Young diagram. It is clear that one can recover the Young diagram
process from the non-intersecting line ensemble by $\la_{i}(t)=M_{\tilde{T}}(i;t)+i$. \end{defn}
\begin{rem}
The map from Young diagrams to collection of integers by $\la\to\left\{ \la_{i}-i\right\} _{i=1}^{\infty}$
is a well known map with mathematical significance, see for instance
\cite{1212.3351} for a survey. This is sometimes presented as the
map $\la\to\left\{ \la_{i}-i+\half\right\} _{i=1}^{\infty}$, where
the target is now half integers. This representation of Young diagrams, which are also known as Maya diagrams, sometimes makes
the resulting calculations much nicer. In this work they do not play
a big role, so we will omit the $\half$ that some other authors use.
\end{rem}

\subsection{Robinson–Schensted Correspondence}
\begin{defn}
\label{associatedpermutation} Fix a parameter $\theta\in\bR_{+}$
and let $\cC_{n}^{\th}$ be the set of $n$ point configurations in
the square $[0,\th]\times[0,\th]\subset\bR_{+}^{2}$ so that no two
points lie in the same horizontal line and no two points lie in the
same vertical line. Every configuration of points $\Pi\in\cC_{n}^{\th}$
has an \textbf{associated permutation} $\si\in S_{n}$ by the following
prescription. Suppose that $0\leq r_{1}<\ld<r_{n}\leq\th$ and $0\leq\ell_{1}<\ld<\ell_{n}\leq\th$
are respectively the sorted lists of $x$ and $y$ coordinates of
the points which form $\Pi$. Then find the unique permutation $\si\in S_{n}$
so that $\Pi=\left\{ \left(r_{i},\ell_{\si(i)}\right)\right\} _{i=1}^{n}$.
Equivalently, if we are given the list of points $\Pi=\left\{ \left(x_{i},y_{i}\right)\right\} _{i=1}^{n}$
sorted so that $0\leq x_{1}<\ld<x_{n}\leq\th$ then $\si$ is the
permutation so that $0\leq y_{\si^{-1}(1)}<y_{\si^{-1}(2)}<\ld<y_{\si^{-1}(n)}\leq\th$.
\end{defn}

\begin{defn}
Let
\[
\cT_{n}^{\th}=\left\{ \left(L,(\ell_{1},\ld,\ell_{n})\right),\left(R,\left(r_{1},\ld,r_{n}\right)\right):\ \sh(L)=\sh(R),0\leq\ell_{1}<\ld<\ell_{n}\leq\th,0\leq r_{1}<\ld<r_{n}\leq\th\right\} 
\]
be the set of pairs of decorated Young tableaux of the same shape
and of size $n$, whose decorations lie in the interval $[0,\th]$. 

The \textbf{decorated Robinson–Schensted (RS) correspondence }is a
bijection $dRS:\cT_{n}^{\th}\to\cC_{n}^{\th}$ from pairs of decorated
Young tableaux in $\cT_{n}^{\th}$ to configuration of points in $\cC_{n}^{\th}$ defined as follows:

Given a pair of decorated Tableau of size $n$, $\left(L,(\ell_{1},\ld,\ell_{n})\right),\left(R,\left(r_{1},\ld,r_{n}\right)\right)$,
use the ordinary RS bijection and the pair of Young tableaux $(L,R)$
to get a permutation $\si\in S_{n}$. Then define
\[
dRS\left(\left(L,(\ell_{1},\ld,\ell_{n})\right),\left(R,\left(r_{1},\ld,r_{n}\right)\right)\right)=\left\{ (r_{1},\ell_{\si(1)}),(r_{2},\ell_{\si(2)}),\ld,(r_{n},\ell_{\si(n)})\right\}. 
\]
Going the other way, the inverse $dRS{}^{-1}:\cC_{n}^{\th}\to\cT_{n}^{\th}$
is described as follows. Given a configuration of points from $\cC_{n}^{\th}$,
first take the permutation $\si$ associated with the configuration
as described in Definition \ref{associatedpermutation}. Then use
the ordinary RS bijection to find a pair of standard Young tableaux
$(L,R)$ corresponding to this permutation. Define
\[
dRS{}^{-1}\left(\left\{ (x_{1},y_{1}),\ld,(x_{n},y_{n})\right\} \right)=\left(L,\left(y_{\si^{-1}(1)},\ld,y_{\si^{-1}(n)}\right),\left(R,(x_{1},\ld,x_{n})\right)\right).
\]

Since the ordinary RS algorithm is a bijection from pairs of standard
Young diagrams of size $n$ to permutations in $S_{n}$, and since
the decorations can be recovered from the coordinates of the points
and vice versa as described above, the decorated RS algorithm is indeed
a bijection as the name suggests. See Figure \ref{fig:small_example} for an example of this bijection. For convenience we will later on use the notations $\mbox{\ensuremath{\cC}}^{\th}=\cup_{n\in\bN}\cC_{n}^{\th}\text{ and }\cT^{\th}=\cup_{n\in\bN}\cT_{n}^{\th}$.\end{defn}
\begin{rem}
With the viewpoint as in Remark \ref{otherwaytothink}, one can equivalently
construct the decorated RS bijection by starting with the list of
points in $\Pi$ in ``two line notation'' $\binom{x_{1}\ x_{2}\ \ld\ x_{n}}{y_{1\ }y_{2}\ \ld\ y_{n}}$,
where the points $\left\{ \left(x_{i},y_{i}\right)\right\} _{i=1}^{n}$
are sorted by $x$-coordinate, and then apply the RS
insertion algorithm on these points to build up the Young tableaux
$\tilde{L}$ and $\tilde{R}$. The same rules for insertion in the
ordinary RS apply; the only difference is that the entries and comparisons
the algorithm makes are between real numbers instead of natural numbers.

Each of the individual decorated tableaux from a pair $\left(\tilde{L},\tilde{R}\right)\in\cT_{n}$
have an associated Young diagram process as defined in Definition\ref{YDprocess}
and an associated non-intersecting line ensemble as defined in Definition
\ref{nonintersectingLineEnsemble}. Since both $L$ and $R$ are the
same shape, and since the decoration are all in the range $[0,\th]$,
the Young diagram processes and the non-intersecting line ensembles
will agree at all times $t\geq\th.$ That is to say $\la_{\tilde{L}}(t)=\la_{\tilde{R}}(t)$
and $M_{\tilde{L}}(\cdot;t)=M_{\tilde{R}}(\cdot;t)$ for $t\geq\th$.
For this reason, it will be more convenient to do a change of coordinates
on the time axis so that the Young diagram process and non-intersecting
line ensemble are defined on $[-\th,\th]$, and the meeting of the
left and right tableau happen at $t=0$. The following definition
makes this precise.\end{rem}
\begin{defn}
\label{YDprocess_pair}For a pair of decorated Young tableaux, we define the \textbf{Young diagram process
$\la_{\tilde{L},\tilde{R}}:[-\th,\th]\to\bY$ }of the pair $\left(\tilde{L},\tilde{R}\right)\in\cT_{n}$
by
\[
\la_{\tilde{L},\tilde{R}}(t)=\begin{cases}
\la_{\tilde{L}}(\th+t) & \ t\leq0\\
\la_{\tilde{R}}(\th-t) & \ t\geq0
\end{cases}.
\]

Notice that this is well defined at $t=0$ since $\la_{\tilde{L}}(\th)=\sh(L)=\sh(R)=\la_{\tilde{R}}(\th)$.
In this way $\la_{\tilde{L},\tilde{R}}$ is an increasing sequence
of Young diagrams when $t<0$ and is a decreasing when $t>0$.

Similarly, for a pair of decorated tableaux, we define the \textbf{non-intersecting line ensemble }$M_{\tilde{L},\tilde{R}}:\bZ\times[-\th,\th]\to\bZ$
of the pair $\left(\tilde{L},\tilde{R}\right)\in\cT_{n}$ by
\[
M_{\tilde{L},\tilde{R}}(i;t)=\begin{cases}
M_{\tilde{L}}(i;\th+t) & \ t\leq0\\
M_{\tilde{R}}(i;\th-t) & \ t\geq0
\end{cases}.
\]
Again, each of the lines are well defined and continuous at $t=0$ because $M_{\tilde{L}}(i;\th)=M_{\tilde{R}}(i;\th)$. See Figure \ref{fig:small_example} for an example of this line ensemble.
\end{defn}

\newcommand\rA{0.2}
\newcommand\rB{0.3}
\newcommand\rC{0.5}
\newcommand\rD{0.8}

\newcommand\lA{0.1}
\newcommand\lB{0.4}
\newcommand\lC{0.6}
\newcommand\lD{0.7}

\begin{figure}
	\begin{subfigure}[b]{.25\textwidth}
	\begin{center}
		\begin{tikzpicture}[scale = 3]
			\coordinate (BL) at (0,0);
			\coordinate (BR) at (1,0);
			\coordinate (TL) at (0,1);
			\coordinate (TR) at (1,1);

			\draw [black] (BL) -- (BR);
			\draw [black] (BL) -- (TL);			
			\draw [black] (TL) -- (TR);
			\draw [black] (BR) -- (TR);
			
			\coordinate (E) at (\rA, \lD);
			\coordinate (F) at (\rB, \lA);
			\coordinate (G) at (\rC, \lC);
			\coordinate (H) at (\rD, \lB); 	
		
			\fill (E) circle[radius=0.5pt];
			\fill (F) circle[radius=0.5pt];
			\fill (G) circle[radius=0.5pt];
			\fill (H) circle[radius=0.5pt];
			
			\foreach \x in {0,0.1,0.2,0.3,0.4,0.5,0.6,0.7,0.8,0.9,1.0}
   			    \draw (\x,0pt) -- (\x,-0.5pt);

			\foreach \x in {0.0,0.5,1.0}
   			    \draw (\x,0pt) -- (\x,-0.5pt)
   			     node[anchor=north] {$\x$};
   			     
   			     \foreach \y in {0,0.1,0.2,0.3,0.4,0.5,0.6,0.7,0.8,0.9,1.0}
   			    \draw (0pt,\y) -- (-0.5pt,\y);
   			    
   			    \foreach \y in {0.0,0.5,1.0}
   			    \draw (0pt,\y) -- (-0.5pt,\y)
   			     node[anchor=east] {$\y$};

		\end{tikzpicture}
		\end{center}
		\caption{Point configuration}
	\end{subfigure}
	\begin{subfigure}[b]{.33\textwidth}
		\begin{center}
		\ytableausetup{centertableaux,boxsize=2.5em} 
		\begin{ytableau} 
		\stackrel{1}{\lA} & \stackrel{2}{\lB} \\
		\stackrel{3}{\lC} \\
		\stackrel{4}{\lD} \\
		\end{ytableau}
		\ytableausetup{centertableaux,boxsize=2.5em} 
		\begin{ytableau} 
		\stackrel{1}{\rA} & \stackrel{3}{\rC} \\
		\stackrel{2}{\rB} \\
		\stackrel{4}{\rD} \\
		\end{ytableau}
		\vspace{14pt}
		\end{center}
		\caption{Pair of decorated Young tableaux}
	\end{subfigure}
	\begin{subfigure}[b]{.3\textwidth}
		\begin{center}
		\begin{tikzpicture}[xscale = 3,yscale=0.5]
			
			\foreach \y in {2,1,0,-1,-2,-3,-4}
   			    \draw[thin,densely dotted](1,\y) -- (-1,\y)
   			     node[anchor=east] {$\y$};
   			     
   			\coordinate (Laxis) at (-1,-4.5);
   			\coordinate (Raxis) at (1,-4.5);
			\coordinate (O) at (0,-4.5);
			\coordinate (vert) at (0,3);   			
   			\draw[black,<->] (Laxis) -- (Raxis); 
   			\draw[black,->] (O) -- (vert); 
   			
   			\foreach \x in {-0.9,-0.8,-0.7,-0.6,-0.5,-0.4,-0.3,-0.2,-0.1,0,0.9,0.8,0.7,0.6,0.5,0.4,0.3,0.2,0.1}
   			    \draw(\x,-4.5) -- (\x,-4.6);
   			    
   			\foreach \x in {-1.0,-0.5,0,0.5,1.0}
   			    \draw (\x,-4.5) -- (\x,-4.6)
   			     node[anchor=north] {$\x$};
   			     
   			\draw (0, -5.3) -- (0, -5.3) node[anchor=north] {t};

			\coordinate (LA) at (-1,-1);
			\coordinate (LB) at (-1,-2);
			\coordinate (LC) at (-1,-3);
			\coordinate (LD) at (-1,-4);
			\coordinate (LE) at (-1,-5);
			
			\coordinate (RA) at (1,-1);
			\coordinate (RB) at (1,-2);
			\coordinate (RC) at (1,-3);
			\coordinate (RD) at (1,-4);
						
			\draw [black,thick] (LD) -- (RD);
						
			\coordinate (JAa) at (-1.0+\lA,-1);
			\coordinate (JAb) at (-1.0+\lA,0);
			\coordinate (JAc) at (-1.0+\lB,0);
			\coordinate (JAd) at (-1.0+\lB,1);
			
			\coordinate (JAe) at (1.0-\rC,1);
			\coordinate (JAf) at (1.0-\rC,0);
			\coordinate (JAg) at (1.0-\rA,0);
			\coordinate (JAh) at (1.0-\rA,-1);

			\draw [black,thick] (LA) -- (JAa);
			\draw [black,thick] (JAa) -- (JAb);
			\draw [black,thick] (JAb) -- (JAc);
			\draw [black,thick] (JAc) -- (JAd);
			\draw [black,thick] (JAd) -- (JAe);
			\draw [black,thick] (JAe) -- (JAf);
			\draw [black,thick] (JAf) -- (JAg);
			\draw [black,thick] (JAg) -- (JAh);
			\draw [black,thick] (JAh) -- (RA);
						
			\coordinate (JBa) at (-1.0+\lC,-2);
			\coordinate (JBb) at (-1.0+\lC,-1);
			
			\coordinate (JBc) at (1.0-\rB,-1);
			\coordinate (JBd) at (1.0-\rB,-2);
			
			\draw [black,thick] (LB) -- (JBa);
			\draw [black,thick] (JBa) -- (JBb);
			\draw [black,thick] (JBb) -- (JBc);
			\draw [black,thick] (JBc) -- (JBd);
			\draw [black,thick] (JBd) -- (RB);
			
			\coordinate (JCa) at (-1.0+\lD,-3);
			\coordinate (JCb) at (-1.0+\lD,-2);
			
			\coordinate (JCc) at (1.0-\rD,-2);
			\coordinate (JCd) at (1.0-\rD,-3);
			
			\draw [black,thick] (LC) -- (JCa);
			\draw [black,thick] (JCa) -- (JCb);
			\draw [black,thick] (JCb) -- (JCc);
			\draw [black,thick] (JCc) -- (JCd);
			\draw [black,thick] (JCd) -- (RC);

		\end{tikzpicture}
		\caption{Non-intersecting line ensemble}
		
		\end{center}
	\end{subfigure}
	\caption{An example of the decorated Robinson-Schensted correspondence applied to a particular configuration from $C_n^{\th}$ when $\th=1.0$ and $n=4$. The lines of the associated non-intersecting line ensemble, $M_{\tilde{L},\tilde{R}}(i;t)$, are also plotted for $i=1,2,3,4$. In this example, the point configuration is $\binom{x_{1}\ x_{2}\ x_{3} \ x_{4}}{y_{1\ }y_{2}\ y_{3} \ y_{n}} = \binom{ \rA \ \rB \ \rC \ \rD}{\lD \ \lA \ \lC \ \lB}$.  The sorted $x$ and $y$ coordinates are respectively $(r_1,r_2,r_3,r_4)=(\rA,\rB,\rC,\rD)$ and $(\ell_1,\ell_2,\ell_3,\ell_4)=(\lA,\lB,\lC,\lD)$, and the associated permutation is $\si = \binom{1 \ 2 \ 3 \ 4}{4 \ 1 \ 3 \ 2}$.  Note that in this case the functions $M_{\tilde{L},\tilde{R}}(i,\cdot)\equiv -i$ are constant for $i \geq 4$; only the first three lines are non-constant. All three pictures contain exactly the same information because of the bijections between the three objects.}
\label{fig:small_example}
\end{figure}

\section{The Poissonized Robinson-Schensted Process}

\subsection{Definition}

\begin{defn}
Fix a parameter $\th \in \bR^+$. A rate 1 Poisson point process in $[0,\th]\times[0,\th]$ is a probability
measure on the set of configurations $\cC^{\th}$. By applying the
decorated RS correspondence this induces a probability measure on
$\cT^{\th}$. We will refer to the resulting random pair of tableaux
$\left(\tilde{L},\tilde{R}\right)\in\cT^{\th}$ as the \textbf{Poissonized RS tableaux}, we refer to the resulting random Young diagram process as the \textbf{Poissonized RS process} and the resulting random non-intersecting line ensemble as the \textbf{Poissonized RS line ensemble}. A realization of the Poissonized RS line ensemble for the case $\th = 40.0$ is displayed in Figure \ref{thepic}.

The main results of this article are to characterize the law of the Poissonized RS process. Both the Young diagram process and the non-crossing line ensemble of this object have natural descriptions. In this section we describe the laws of these. For the rest of the section, denote by $\left(\tilde{L},\tilde{R}\right)=\left(\left(L,(\ell_{1},\ld,\ell_{\abs{\sh L}})\right),\left(R,(r_{1},\ld,r_{\abs{\sh R}})\right)\right)$ a Poissonized RS random variable. The Young diagram process $\la_{\tilde{L},\tilde{R}}$ is a $\bY$ valued stochastic process,
and $M_{\tilde{L},\tilde{R}}$ is a random non-intersecting line
ensemble.
\end{defn}

\subsection{Law of the Young diagram process}
\begin{thm}
\label{SchurProcessThm}Fix $\th>0$ and times $t_{1}<\ld<t_{n}\in[-\th,0)$
and $s_{1}<\ld<s_{m}\in(0,\th]$. Suppose we are given an increasing
list of Young diagrams $\la^{(1)}\subset\ld\subset\la^{(n)}$, a decreasing
list of Young diagrams $\mu^{(1)}\supset\ld\supset\mu^{(m)}$ and
a Young diagram $\nu$ with $\nu\supset\la^{(n)}$ and $\nu\supset\mu^{(1)}$.
To simplify the presentation we will use the convention $\la^{(0)}=\emptyset,\la^{(n+1)}=\nu,\mu^{(0)}=\nu,\mu^{(m+1)}=\emptyset$
and $t_{0}=-\th,t_{n+1}=0,s_{0}=0,s_{m+1}=\th$. The Poissonized RS process $\la_{\tilde{L},\tilde{R}}$ has the following finite dimensional distribution:

\begin{eqnarray*}
 &  & \p\left( \bigcap_{i=1}^{n}\left\{ \la_{\tilde{L},\tilde{R}}(t_{i})=\la^{(i)}\right\} \cap \left\{ \la_{\tilde{L},\tilde{R}}(0)=\nu\right\} \cap \bigcap_{j=1}^{m} \left\{ \la_{\tilde{L},\tilde{R}}(s_{j})=\mu^{(j)}\right\} \right)\\
 & = & e^{-\th^{2}} \left( \prod_{i=0}^{n} \dim(\la^{(i+1)}/\la^{(i)}) \frac{(t_{i+1} - t_{i})^{ \abs{\la^{(i+1)}/\la^{(i)}} }}{\abs{\la^{(i+1)}/\la^{(i)}}!} \right) \cdot \left( \prod_{j=0}^{m} \dim(\mu^{(j)}/\mu^{(j+1)})\frac{(s_{j+1} - s_{j})^{\abs{\mu^{(j)}/\la^{(j+1)}}}}{\abs{\mu^{(j)}/\mu^{(j+1)}}!} \right),
\end{eqnarray*}
where $\dim(\la/\mu)$ is the number of standard Young tableau of skew shape $\la/\mu$.

\end{thm}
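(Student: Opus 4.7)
The plan is to push the rate-$1$ Poisson point process on $\cC^{\th}$ forward through the decorated RS bijection $dRS^{-1}$ to get an explicit density on $\cT^{\th}$, and then to count directly those pairs $(\tilde{L},\tilde{R})$ realizing the event. A direct calculation shows that the joint density of $(r_{1},\ld,r_{k},\ell_{1},\ld,\ell_{k},\si)$---combining the Poisson probability $e^{-\th^{2}}\th^{2k}/k!$ for the total number of points, the order-statistic densities $(k!/\th^{k})^{2}$ for the sorted $x$- and $y$-coordinates given $k$ points, and the uniform law $1/k!$ on the pairing permutation $\si\in S_{k}$---is exactly $e^{-\th^{2}}$, independently of $k$. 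Since $dRS^{-1}$ acts as the identity on sorted decorations and only relabels $\si$ as a pair of same-shape standard Young tableaux $(L,R)$, the induced law on $\cT^{\th}$ has density $e^{-\th^{2}}$ per $(\tilde{L},\tilde{R})$, with respect to Lebesgue measure on sorted decorations and counting measure on the discrete pair $(L,R)$.

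Next, I would translate the event into explicit conditions on $(\tilde{L},\tilde{R})$ via Definitions \ref{YDprocess} and \ref{YDprocess_pair}. The condition $\la_{\tilde{L},\tilde{R}}(t_{i})=\la^{(i)}$ for $t_{i}<0$ reads $\la_{\tilde{L}}(\th+t_{i})=\la^{(i)}$, i.e., the boxes of $L$ with decoration at most $\th+t_{i}$ are exactly $\la^{(i)}$. Since $L$ is a standard Young tableau and its decorations are sorted, this decomposes into: (a) $\sh(L)=\nu$; (b) the restriction of $L$ to each skew shape $\la^{(i+1)}/\la^{(i)}$ uses exactly the consecutive integer values $\{\abs{\la^{(i)}}+1,\ld,\abs{\la^{(i+1)}}\}$ and is hence a skew standard Young tableau on that window; (c) for each $i$, the decorations $\ell_{k}$ with index $k$ in this window lie in the interval $(\th+t_{i},\th+t_{i+1}]$ of length $t_{i+1}-t_{i}$. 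An analogous decomposition holds for $\tilde{R}$ with skew shapes $\mu^{(j)}/\mu^{(j+1)}$ and intervals of length $s_{j+1}-s_{j}$.

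Finally, I would combine these. The choices of $L$ satisfying (a)--(b) factorize across the blocks: the value windows are disjoint and consecutive, so the row/column monotonicity across pieces is automatic, and within each piece $\la^{(i+1)}/\la^{(i)}$ there are $\dim(\la^{(i+1)}/\la^{(i)})$ skew standard Young tableaux on the prescribed window. This gives $\prod_{i=0}^{n}\dim(\la^{(i+1)}/\la^{(i)})$ possibilities for $L$, and analogously $\prod_{j=0}^{m}\dim(\mu^{(j)}/\mu^{(j+1)})$ for $R$. Integrating the density $e^{-\th^{2}}$ against the decoration constraints in (c) then uses the elementary fact that the Lebesgue volume of sorted $k$-tuples of reals inside an interval of length $u$ is $u^{k}/k!$; this produces the two remaining products $\prod_{i}(t_{i+1}-t_{i})^{\abs{\la^{(i+1)}/\la^{(i)}}}/\abs{\la^{(i+1)}/\la^{(i)}}!$ and $\prod_{j}(s_{j+1}-s_{j})^{\abs{\mu^{(j)}/\mu^{(j+1)}}}/\abs{\mu^{(j)}/\mu^{(j+1)}}!$. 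Multiplying together yields precisely the claimed expression.

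The main technical input is the density calculation in the first paragraph---once one knows that the pushforward measure has uniform density $e^{-\th^{2}}$ per pair $(\tilde{L},\tilde{R})$, the rest is elementary combinatorics and calculus. The only subtlety in the counting is the gluing step in item (b): the pieces combine to a valid standard Young tableau of shape $\nu$ exactly because consecutive integer value windows force the row/column monotonicity across pieces for free. No representation theory or analysis is needed here, only the bijection $dRS$ and the definitions of Section 2.
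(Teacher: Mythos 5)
Your proposal is correct, and it reaches the formula by a route that is organized quite differently from the paper's. The paper proves the theorem through a chain of conditional-probability factorizations: Lemma \ref{conditioninglemma} establishes that given $N=n$ the permutation $\si$ is uniform on $S_{n}$ and independent of the two sets of order-statistic decorations; Corollary \ref{YTcor} computes $\p(\la_{\tilde{L},\tilde{R}}(0)=\nu)$; Lemma \ref{sizelemma} handles the sizes $\abs{\la_{\tilde{L},\tilde{R}}(t_i)}$ via independent Poisson counts in disjoint strips; Lemma \ref{shapelem} handles the shapes given the sizes via uniformity of $L$ among SYT of shape $\nu$; and the final proof multiplies the resulting conditional probabilities. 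You instead compute the pushforward of the point process under $dRS^{-1}$ as an explicit constant density $e^{-\th^{2}}$ on $\cT^{\th}$ (counting measure on the pair $(L,R)$ times Lebesgue measure on each set of sorted decorations), observe that the event is a product set in these coordinates, and evaluate the probability as (number of admissible $L$) $\times$ (number of admissible $R$) $\times$ (volume of the decoration constraints). The underlying probabilistic inputs are identical --- your density calculation $e^{-\th^{2}}\frac{\th^{2k}}{k!}\cdot\frac{k!}{\th^{k}}\cdot\frac{k!}{\th^{k}}\cdot\frac{1}{k!}=e^{-\th^{2}}$ is exactly the content of Lemma \ref{conditioninglemma} repackaged as a Janossy density, and your tableau count $\prod_{i}\dim(\la^{(i+1)}/\la^{(i)})$ for SYT of shape $\nu$ with prescribed intermediate shapes is the same enumeration as in Lemma \ref{shapelem}. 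What your version buys is directness: the constant density makes the factorization into shapes and sizes, and into left and right halves, automatic rather than something to be argued via conditional independence at each stage. What the paper's version buys is that the conditional-independence structure is isolated into reusable lemmas, which is exploited almost verbatim for the geometric RSK model in Section 5. One small point worth making explicit if you write this up: in your step (c) the intervals $(\th+t_{i},\th+t_{i+1}]$ for $i=0,\ld,n$ partition $(0,\th]$, so the volume of the set of sorted $\abs{\nu}$-tuples with prescribed counts in each block really does factor as $\prod_{i}(t_{i+1}-t_{i})^{\abs{\la^{(i+1)}/\la^{(i)}}}/\abs{\la^{(i+1)}/\la^{(i)}}!$, with no leftover constraint.
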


\begin{figure}
\begin{center}
\includegraphics[clip,width=6in]{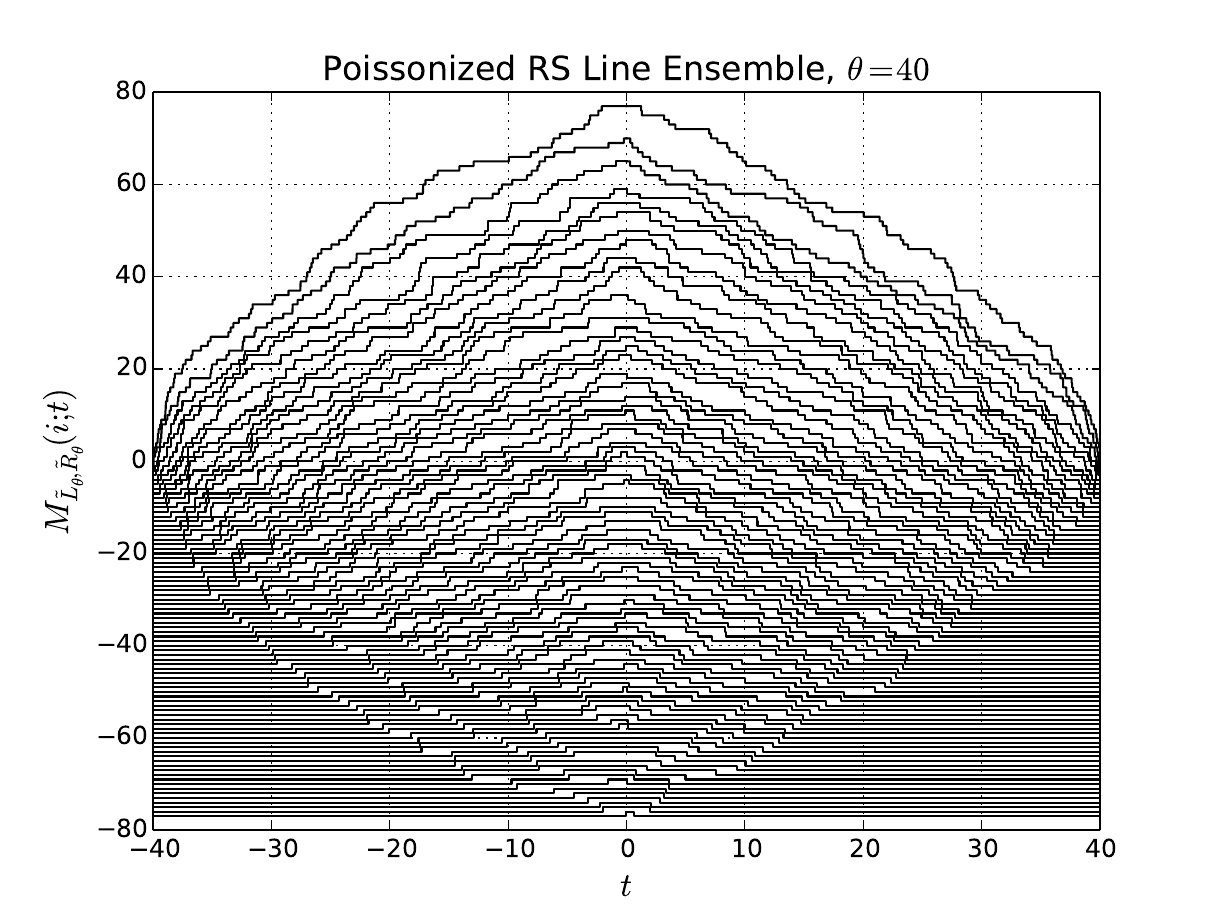}
\end{center}
\caption{A realization of the non-intersecting line ensemble for the Poissonized
RS process in the case $\th=40$ created from $\approx 1600$ points
in the plane. Only the lines that are non-constant are shown. This can be simulated efficiently because the tableaux are created by exactly one execution of the decorated Robinson-Schensted algorithm applied to a realization of a Poisson point process in the plane. }
\label{thepic}
\end{figure}

The proof of Theorem \ref{SchurProcessThm} is deferred to Subsection \ref{subsectionpf} and is proven using simple properties of the Robinson Schensted correspondence and probabilistic arguments. 

\begin{rem}
\label{ShurProcRem}
The conclusion of the theorem can be rewritten in a very algebraically satisfying way in terms of Schur functions specialized by the Plancherel specialization, $\rho(t)$ (see Subsection \ref{sec:notation} for our notations) Using the identity from Equation \ref{schurid}, the result of Theorem \ref{SchurProcessThm} can be rewritten as
\begin{eqnarray*}
 &  & \p\left(\bigcap_{i=1}^{n}\left\{ \la_{\tilde{L},\tilde{R}}(t_{i})=\la^{(i)}\right\} \cap \left\{ \la_{\tilde{L},\tilde{R}}(0)=\nu\right\} \cap \bigcap_{j=1}^{m}\left\{ \la_{\tilde{L},\tilde{R}}(s_{j})=\mu^{(j)}\right\} \right)\\
 & = & e^{-\th^{2}}\left(\prod_{i=0}^{n}s_{\la^{(i+1)}/\la^{(i)}}\left(\rho_{t_{i+1}-t_{i}}\right)\right)\cdot\left(\prod_{j=0}^{m}s_{\mu^{(j)}/\mu^{(j+1)}}\left(\rho_{s_{j+1}-s_{j}}\right)\right).
\end{eqnarray*}
In the literature (see for instance \cite{Okounkov01correlationfunction}
or \cite{1212.3351} for a survey) this type of distribution arising from specializations on sequence of Young diagrams is known as a Schur process. The particular Schur process
that appears here has a very simple ``staircase'' diagram, illustrated
here in the case $n=m=2$:

\begin{center}
\begin{tikzpicture}[scale=1.0] 
\node (A) at (0,0) {$\emptyset$}; 
\node (B) at (1,1) {$\lambda_{\tilde{L},\tilde{R}}(t_1)$}; 
\node (C) at (2,2) {$\lambda_{\tilde{L},\tilde{R}}(t_2)$};
\node (O) at (3,3) {$\lambda_{\tilde{L},\tilde{R}}(0)$}; 
\node (X) at (4,2) {$\lambda_{\tilde{L},\tilde{R}}(s_1)$}; 
\node (Y) at (5,1) {$\lambda_{\tilde{L},\tilde{R}}(s_2)$};
\node (Z) at (6,0) {$\emptyset$}; 
\path[->,font=\scriptsize] 
(A) edge node[left]{$\rho_{t_1 + \theta}$} (B) 
(B) edge node[left]{$\rho_{t_2 - t_1}$} (C)
(C) edge node[left]{$\rho_{0 - t_2}$} (O)
(O) edge node[right]{$\rho_{s_1 - 0}$} (X)
(X) edge node[right]{$\rho_{s_2 - s_1}$} (Y)
(Y) edge node[right]{$\rho_{\theta - s_2}$} (Z);
\end{tikzpicture}
\end{center}
In Section 4, we will further see that the Poissonized RS process $\la_{\tilde{L},\tilde{R}}$ is the same as a particular
instance model introduced in \cite{Borodin_stochasticdynamics}, which
is itself a special case of dynamics studied in \cite{Borodin04markovprocesses}.
\end{rem}
\begin{cor}
At any fixed time $t$, the Young diagram $\la_{\tilde{L},\tilde{R}}\left(t\right)$
has the law of the Poissonized Plancherel measure with parameter $\sqrt{\th\left(\th-\abs t\right)}$.\end{cor}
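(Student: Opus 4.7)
The plan is to deduce the corollary from Theorem \ref{SchurProcessThm} by marginalizing the joint distribution and summing away the intermediate Young diagram at time $0$. By the symmetry of the construction between the left and right halves of the time axis, it suffices to treat the case $t \in [-\th, 0]$. Fixing $\la \in \bY$ and applying the theorem with $n=1$, $m=0$, $t_1 = t$, and $\la^{(1)} = \la$, the joint probability that $\la_{\tilde L, \tilde R}(t) = \la$ and $\la_{\tilde L, \tilde R}(0) = \nu$ (for any $\nu \supset \la$) becomes, after rewriting via Equation \ref{schurid},
\[
e^{-\th^{2}}\, s_{\la}(\rho_{\th+t})\, s_{\nu/\la}(\rho_{-t})\, s_{\nu}(\rho_{\th}).
\]

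The heart of the argument is then to sum over $\nu \supset \la$. I would invoke the skew Cauchy identity (obtained by setting $\mu = \emptyset$ in the standard skew Cauchy identity),
\[
\sum_{\nu} s_{\nu/\la}(x)\, s_{\nu}(y) \;=\; \Bigl(\prod_{i,j}(1 - x_{i} y_{j})^{-1}\Bigr)\, s_{\la}(y),
\]
and specialize $x \to \rho_{-t}$, $y \to \rho_{\th}$. Under two Plancherel specializations the Cauchy kernel evaluates to $e^{(-t)\th} = e^{\abs{t}\th}$; this is a standard identity that is itself a direct consequence of the ordinary RS correspondence together with Equation \ref{schurid} (namely $\sum_{\la} s_{\la}(\rho_{a})\, s_{\la}(\rho_{b}) = e^{ab}$). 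The marginal sum therefore collapses to $e^{\abs{t}\th}\, s_{\la}(\rho_{\th}) = e^{\abs{t}\th} \dim(\la)\, \th^{\abs{\la}}/\abs{\la}!$.

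Plugging this back into the one-time marginal and using $\th + t = \th - \abs{t}$ yields
\[
\p\bigl(\la_{\tilde L, \tilde R}(t) = \la\bigr) \;=\; e^{-\th(\th - \abs{t})}\, \dim(\la)^{2}\, \frac{\bigl(\th(\th - \abs{t})\bigr)^{\abs{\la}}}{(\abs{\la}!)^{2}},
\]
which is exactly the Poissonized Plancherel measure of parameter $\sqrt{\th(\th - \abs{t})}$ as defined in Section \ref{sec:notation}. The main technical step is the Cauchy identity calculation; once that is in hand the rest is bookkeeping. As an alternative that bypasses the Schur process machinery entirely, one can observe directly from the definitions that $\la_{\tilde L, \tilde R}(t) = \la_{\tilde L}(\th + t)$ is precisely the shape produced by the ordinary RS correspondence applied to the Poisson points lying in the sub-rectangle $[0,\th] \times [0, \th + t]$, whose area is $\th(\th - \abs{t})$; the standard fact recalled in the introduction (that a rate-$1$ Poisson process in a rectangle of area $A$ produces a shape with the Poissonized Plancherel law of parameter $\sqrt{A}$) then yields the same conclusion.
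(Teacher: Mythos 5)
Your main argument is correct and essentially identical to the paper's own proof: both apply Theorem \ref{SchurProcessThm} at the two times $t$ and $0$, sum over $\nu$ via the skew Cauchy identity with $H(\rho_{a};\rho_{b})=e^{ab}$ under the Plancherel specialization, and simplify using $\th+t=\th-\abs{t}$. Your alternative sketch at the end (identifying $\la_{\tilde L,\tilde R}(t)$ with the RS shape of the Poisson points in a sub-rectangle of area $\th(\th-\abs{t})$) is also valid, but be aware that this identification is precisely the content of Theorem \ref{curve_thm} proven later in Section 4 --- it requires the symmetry $RS(\si^{-1})=(R,L)$ and is not quite immediate from the definitions.
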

\begin{proof}
Suppose first that $t\leq0$ with the case $t\geq0$ being analogous.
By Theorem \ref{SchurProcessThm}, we have the two time probability distribution of
$\la_{\tilde{L},\tilde{R}}$ at time $t$ and time $0$ is
\[
\p\left(\left\{ \la_{\tilde{L},\tilde{R}}\left(t\right)=\la\right\} \cap\left\{ \la_{\tilde{L},\tilde{R}}(0)=\nu\right\} \right)=e^{-\th^{2}}s_{\la}\left(\rho_{t+\th}\right)s_{\nu/\la}\left(\rho_{0-t}\right)s_{\nu}\left(\rho_{\th-0}\right).
\]
Summing over $\nu\in\bY$ and employing the Cauchy identity $\sum_{\mu}s_{\mu/\la}(\rho)s_{\mu}(\rho')=H(\rho;\rho')s_{\la}(\rho')$, and using $H(\rho_{a};\rho_{b})=\exp\left(ab\right)$ for the exponential
specialization, we have
\begin{eqnarray*}
\p\left(\la_{\tilde{L},\tilde{R}}\left(t\right)=\la\right) & = & e^{-\th^{2}}s_{\la}\left(\rho_{t+\th}\right)\sum_{\nu\in\bY}s_{\nu/\la}\left(\rho_{0-t}\right)s_{\nu}\left(\rho_{\th-0}\right)\\
 & = & e^{-\th^{2}}H(\rho_{-t};\rho_{\th})s_{\la}\left(\rho_{t+\th}\right)s_{\la}\left(\rho_{\th}\right)\\
 & = & e^{-\th^{2}}e^{\abs t\th}s_{\la}\left(\rho_{\th-\abs t}\right)s_{\la}\left(\rho_{\th}\right)\\
 & = & e^{-\left(\sqrt{\th(\th-\abs t}\right)^{2}}\left(\frac{\dim\left(\la\right)\left(\sqrt{\th(\th-\abs t)}\right)^{\abs{\la}}}{\abs{\la}!}\right)^{2},
\end{eqnarray*}
as desired.
\end{proof}

The Poissonized Plancherel measure and its asymptotics are well studied,
see for example \cite{Borodin00asymptoticsof} or \cite{Johansson01discreteorthogonal}.
The analysis lets us see that, for any fixed $t$, the points of the line ensemble $M_{\tilde{L},\tilde{R}}\left(\cdot;t\right)$
form a determinantal point process whose kernel is the discrete Bessel
kernel. We can also use these results to write some asymptotics for
the Poissonized RS line ensemble,
for instance the following:
\begin{cor}
Let $\left(\tilde{L}_{\th},\tilde{R}_{\th}\right)$ be the Poissonized
RS tableaux of parameter $\th$. For fixed $\tau\in(-1,1)$,
the \uline{top line} $M_{\tilde{L}_{\th},\tilde{R}_{\th}}(1;\cdot)$
of the line ensemble at some fixed time $t$ satisfies the following
law of large numbers type behavior:

\[
\lim_{\th\to\infty}\frac{M_{\tilde{L}_{\th},\tilde{R}_{\th}}(1;\tau\th)}{\th}=2\sqrt{1-\abs{\tau}}\text{ a.s.}
\]
The fluctuations are of the Tracy-Widom type:
\[
\lim_{\th\to\infty}\p\left(\frac{M_{\tilde{L}_{\th},\tilde{R}_{\th}}(1;\tau\th)-2\th\sqrt{1-\abs{\tau}}}{\th^{1/3}(1-\abs{\tau})^{1/6}}\leq s\right)=F(s),
\]
where $F(s)$ is the GUE Tracy Widom distribution. 
\end{cor}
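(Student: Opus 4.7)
The plan is to reduce this asymptotic statement to the well-studied asymptotics of the length of the first row under the ordinary Poissonized Plancherel measure. By Definition \ref{nonintersectingLineEnsemble} and Definition \ref{YDprocess_pair}, we have $M_{\tilde{L}_{\th},\tilde{R}_{\th}}(1;\tau\th) = \la_{1}(\tau\th) - 1$, where $\la_{\cdot}(\tau\th)$ are the row lengths of the random Young diagram $\la_{\tilde{L}_{\th},\tilde{R}_{\th}}(\tau\th)$. The preceding corollary immediately identifies the law of this diagram as the Poissonized Plancherel measure with parameter
\[
\eta = \eta(\th,\tau) = \sqrt{\th\left(\th - \abs{\tau\th}\right)} = \th\sqrt{1-\abs{\tau}}.
\]
Since $\tau \in (-1,1)$ is fixed, $\eta \to \infty$ as $\th \to \infty$, so the problem reduces to the large-$\eta$ behavior of $\la_{1}$ under $\p_{\eta}$.

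For the law of large numbers, one invokes the classical result (originally due to Vershik-Kerov and Logan-Shepp for $\la_{1}$ after dePoissonization, and in the Poissonized form recorded in \cite{Borodin00asymptoticsof}) that $\la_{1}/\eta \to 2$ almost surely under $\p_{\eta}$. Substituting $\eta = \th\sqrt{1-\abs{\tau}}$ and dividing by $\th$ yields the claimed limit $2\sqrt{1-\abs{\tau}}$; the subtraction of $1$ vanishes in the limit. For the fluctuation statement, one invokes the Baik-Deift-Johansson theorem in its Poissonized form (see \cite{Borodin00asymptoticsof} or \cite{Johansson01discreteorthogonal}) which asserts
\[
\lim_{\eta\to\infty}\p_{\eta}\!\left(\frac{\la_{1} - 2\eta}{\eta^{1/3}}\leq s\right) = F(s).
\]
Substituting $\eta = \th\sqrt{1-\abs{\tau}}$ gives the normalization $\eta^{1/3} = \th^{1/3}(1-\abs{\tau})^{1/6}$ and centering $2\eta = 2\th\sqrt{1-\abs{\tau}}$, which match exactly the scaling in the statement; again the $-1$ from $M = \la_{1}-1$ is absorbed since $\th^{1/3}(1-\abs{\tau})^{1/6}\to\infty$.

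The only subtlety, and the step I would be most careful about, is verifying that the cited results apply in the Poissonized (rather than dePoissonized) regime uniformly in the parameter along the sequence $\eta(\th)\to\infty$, which for fixed $\tau\in(-1,1)$ is immediate since $\eta(\th)$ grows linearly in $\th$. No auxiliary tightness or probabilistic coupling argument is needed because both the LLN and the Tracy-Widom convergence are statements about the one-time marginal $\la_{\tilde{L}_{\th},\tilde{R}_{\th}}(\tau\th)$, whose law is given explicitly by the preceding corollary. Thus the corollary follows by direct appeal to the known asymptotics for $\la_{1}$ under $\p_{\eta}$, composed with the reparameterization $\eta = \th\sqrt{1-\abs{\tau}}$.
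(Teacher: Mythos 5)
Your proposal is correct and follows exactly the route the paper intends: the preceding corollary identifies the one-time marginal $\la_{\tilde{L}_{\th},\tilde{R}_{\th}}(\tau\th)$ as Poissonized Plancherel with parameter $\th\sqrt{1-\abs{\tau}}$, and the claims then follow from the known first-row asymptotics of that measure in \cite{Borodin00asymptoticsof} and \cite{Johansson01discreteorthogonal} (the paper in fact gives no further proof beyond citing these). Your handling of the reparameterization and the negligible $-1$ shift is exactly what is needed.
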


\subsection{The non-intersecting line ensemble}
\begin{defn}
Fix a parameter $\th>0$ and an initial location $x\in\bZ$. Let $P_{L}:\left[0,\th\right]\to\bZ$
and $P_{R}:\left[0,\th\right]\to\bZ$ be two independent rate 1 Poisson jump
processes with initial condition $P_{L}(0)=P_{R}(0)=x$. Define $P:[-\th,\th]\to\bZ$
by
\[
P(t)=\begin{cases}
P_{L}(\th+t) & t<0\\
P_{R}(\th-t) & t\geq0
\end{cases}.
\]

A \textbf{Poisson arch} on $[-\th,\th]$ with initial location $x$
is the stochastic process $\left\{ A(t)\right\} _{t\in[-\th,\th]}$
whose probability distribution is the conditional probability distribution
of the process $\left\{ P(t)\right\} _{t\in[-\th,\th]}$ conditioned
on the event that $\left\{ P_{L}(\th)=P_{R}(\th)\right\} $ . This
has $A(-\th)=A(\th)=x$ and the conditioning ensures that $A(t)$
is actually continuous at $t=0$.
\end{defn}

The Poissonized RS line ensemble, $M_{\tilde{L},\tilde{R}}(\cdot ; \cdot)$ has
a simple description in terms of Poisson arches which are conditioned not to intersect:
\begin{thm}
\label{ArchesThm}Fix $\th>0$ and times $-\th<t_{1}<\ld<t_{n}<\th$.
For any $N\in\bN$, consider a non-intersecting line ensemble $A:\left\{ 1,2,\ld,N\right\} \times[-\th,\th]\to\bZ$,
so that $\left\{ A(i;\cdot)\right\} _{i=1}^{N}$ is a collection of
$N$ Poisson arches on $[-\th,\th]$ with the initial condition $A_{i}(-\th)=A_{i}(\th)=-i$
which are conditioned not to intersect i.e. $A(i;t)<A(j;t)$ for all
$i>j$. Then the joint probability distributions of the line ensemble
$A$ has the same conditional distribution as top $N$ lines of the
non-intersecting line ensemble $M_{\tilde{L},\tilde{R}}$, conditioned
on the event that all of the other lines $M_{\tilde{L},\tilde{R}}\left(k;\cdot\right)$
for $k>N$ do not move at all. To be precise, for fixed target points
$\left\{ x_{i,j}\right\} _{1\leq i\leq n,1\leq j\leq N}$ we have:
\[
\p\left(\bigcap_{i=1}^{n}\left(\bigcap_{j=1}^{N}\left\{ A(j;t_{i})=x_{i,j}\right\} \right)\right)=\p\left(\bigcap_{i=1}^{n}\left(\bigcap_{j=1}^{N}\left\{ M_{\tilde{L},\tilde{R}}(j;t_{i})=x_{i,j}\right\} \right)\given{M_{\tilde{L},\tilde{R}}(k;\cdot)\equiv -k\ \forall k>N}\right).
\]
\end{thm}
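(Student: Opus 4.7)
The strategy is to express both sides of the identity as products of $N\times N$ determinants of rate-$1$ Poisson transition kernels and match them term-by-term. First I would translate everything into the language of Young diagrams: the event $\{M_{\tilde{L},\tilde{R}}(k;\cdot)\equiv 1-k\ \forall k>N\}$ asserts that $\la_{\tilde{L},\tilde{R}}(t)$ has at most $N$ nonempty rows at every time $t\in[-\th,\th]$, and by monotonicity of the Young diagram process on $[-\th,0]$ and on $[0,\th]$ this is equivalent to the single-time event $\{\la_{\tilde{L},\tilde{R}}(0)\text{ has at most }N\text{ rows}\}$. Under the identification $\la_j(t)=M_{\tilde{L},\tilde{R}}(j;t)+j$ (up to the implicit convention shift in the theorem statement), the conditions $M_{\tilde{L},\tilde{R}}(j;t_i)=x_{i,j}$ for $j=1,\ldots,N$ determine a full Young diagram $\la^{(i)}$ at each time $t_i$, so the right-hand side becomes a conditional probability that a specific sequence of diagrams $\la^{(1)},\ldots,\la^{(n)}$, each with at most $N$ rows, is visited in order.

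Next I would invoke Theorem \ref{SchurProcessThm} (sorting the $t_i$'s into negative and positive parts and summing over the endpoint $\nu=\la_{\tilde{L},\tilde{R}}(0)$ with at most $N$ rows) to express the numerator of the conditional probability as $e^{-\th^{2}}$ times a product of skew-Schur factors $s_{\la^{(i+1)}/\la^{(i)}}(\rho_{t_{i+1}-t_i})$ along the chain $\emptyset\to\la^{(1)}\to\cdots\to\la^{(n)}\to\emptyset$ of total time $2\th$. Applying the Jacobi-Trudi identity specialized by $\rho_t$ yields, for any $\la,\mu$ with at most $N$ rows,
\[
s_{\la/\mu}(\rho_t)=\det\!\left(\frac{t^{\la_i-\mu_j-i+j}}{(\la_i-\mu_j-i+j)!}\right)_{i,j=1}^{N}=e^{Nt}\det\!\left(p_t(\mu_j-j,\la_i-i)\right)_{i,j=1}^{N},
\]
where $p_t(a,b)=e^{-t}t^{b-a}/(b-a)!$ is the rate-$1$ Poisson transition kernel. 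Each such factor is exactly the Karlin-McGregor / Lindstr\"om-Gessel-Viennot determinant for the transition of $N$ non-intersecting Poisson walkers, so the numerator becomes a product of single-time-interval Karlin-McGregor weights along the entire chain.

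The left-hand side is, by the very construction of non-intersecting Poisson arches, such a product of single-time-interval Karlin-McGregor determinants divided by the bridge normalizer $Z=\det(p_{2\th}(a_j,a_i))_{i,j=1}^N$ with $a_i=1-i$, the probability that $N$ independent non-intersecting rate-$1$ Poisson walkers return to their starting configuration after time $2\th$. Since the numerators now match term-by-term, the proof reduces to verifying that the denominator on the right-hand side, i.e. the probability of the conditioning event $\{\la_{\tilde{L},\tilde{R}}(0)\text{ has at most }N\text{ rows}\}$, equals $e^{-\th^{2}}Z$. This normalization check is what I expect to be the main obstacle; I would handle it by summing the $t=0$ marginal $\p(\la_{\tilde{L},\tilde{R}}(0)=\la)=e^{-\th^{2}}s_{\la}(\rho_\th)^{2}$ (already derived in the corollary following Theorem \ref{SchurProcessThm}) over all $\la$ with $\ell(\la)\leq N$, expanding each $s_\la(\rho_\th)$ via Jacobi-Trudi, and applying the Cauchy-Binet identity to collapse the double sum into the single determinant $\det(p_{2\th}(a_j,a_i))=Z$. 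Once this identification is in place, the two conditional laws coincide as ratios of the same product of determinants, completing the proof.
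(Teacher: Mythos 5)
Your proposal follows essentially the same route as the paper: both sides are reduced, via the Karlin--MacGregor theorem and the Jacobi--Trudi identity under the Plancherel specialization, to the same product of skew Schur factors $s_{\la^{(i+1)}/\la^{(i)}}(\rho_{t_{i+1}-t_i})$, which Theorem \ref{SchurProcessThm} then identifies (up to $e^{-\th^2}$) with the probability that the line ensemble hits the targets while all lines below the $N$-th stay frozen. The only genuine divergence is in how the normalizing constant is pinned down. The paper never computes $Z$ in closed form: it simply observes that $Z$ equals the sum of the Karlin--MacGregor numerator over all admissible target configurations, re-reads that sum through Theorem \ref{SchurProcessThm} as $e^{\th^2}$ times the probability of the conditioning event $\{M_{\tilde{L},\tilde{R}}(k;\cdot)\equiv 1-k\ \forall k>N\}$, and lets the two factors of $e^{-\th^2}$ cancel. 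You instead propose to evaluate both quantities explicitly --- $Z$ as the bridge determinant $\det(p_{2\th}(a_j,a_i))$ and the conditioning probability as $\sum_{\ell(\la)\le N}e^{-\th^2}s_\la(\rho_\th)^2$ collapsed by Cauchy--Binet. That works (the restriction $\ell(\la)\le N$ is harmless in Cauchy--Binet because rows with $\la_i-i+j<0$ kill the determinant), but it is more computation than needed, and you should be a little careful that your ``bridge normalizer'' accounts for the fact that each arch is individually conditioned to be a bridge before the joint non-intersection conditioning; the paper's bookkeeping sidesteps this by treating $Z$ as an abstract constant. Your reduction of the conditioning event to the single-time event at $t=0$ via monotonicity of the Young diagram process is a nice observation that the paper uses only implicitly.
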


The proof of this goes through the Karlin-MacGregor/Lindstr\"{o}m–Gessel–Viennot theorem and is deferred to Section \ref{subsection_ArchesThm}

\subsection{Proof of Theorem \ref{SchurProcessThm}} \label{subsectionpf}

We prove this theorem by splitting it into several lemmas. The idea
behind these lemmas is to exploit the fact that decorations and the tableaux that make up the pair of decorated tableaux of the Poissonized RS process are conditionally independent when conditioned on certain carefully chosen events.




\newcommand\Cti{ C_{t_i}\big( \la^{(i)} \big) }
\newcommand\Sti{ S_{t_i}\big( |\la^{(i)}| \big) }
\newcommand\Csi{ C_{s_i}\big( \mu^{(i)} \big) }
\newcommand\Ssi{ S_{s_i}\big( |\mu^{(i)}| \big) }
\newcommand\Co{ C_{0}\big( \nu \big) }


\begin{defn}
\label{def:shorthands}
For any $-\th < t<\th$, Young diagram $\la$, and any $k\in\bN$ define the shorthand notations:
\begin{eqnarray*}
C_{t}\big( \la \big) &:=&  \left\{ \la_{\tilde{L},\tilde{R}}(t)=\la \right\}, \\ 
S_{t}\big( k \big) &:=&  \left\{ \abs{\la_{\tilde{L},\tilde{R}}(t)}= k \right\}.  
\end{eqnarray*}
With this notation, Theorem \ref{SchurProcessThm} is an explicit formula for the probability of the event  $\bigcap_{i=1}^{n} \Cti \cap \Co \cap \bigcap_{j=1}^{m} \Csi$. The events $\Sti$ and $\Ssi$ will also appear in our arguments below.

$C_{t}(\la)$ is the event that the Young diagram process at time $t$ is exactly equal to $\la$, while $S_{t}(|\la|) \subset C_{t}(\la)$ is the event that the Young diagram process at time $t$ has the same \uline{size} as $\la$ (but is possibly a different shape).  
\end{defn}

\begin{lem}
\label{conditioninglemma}Let $N=\abs{\la_{\tilde{L},\tilde{R}}(0)}$.
Then $N$ has the distribution of a Poisson random variable, $N\sim Poisson(\th^{2})$.
Moreover, conditioned on the event $\left\{ N=n\right\} $, the decorations
$\left(\ell_{1},\ld,\ell_{n}\right)$ and $\left(r_{1},\ld,r_{n}\right)$
of $\left(\tilde{L},\tilde{R}\right)$ are independent. Still conditioned
on $\{N=n\}$, the permutation $\si\in S_{n}$ associated with $\left(L,R\right)$
via the RS correspondence is uniformly distributed in $S_{n}$ and
is independent of both sets of decorations. \end{lem}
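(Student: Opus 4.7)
The plan is to work directly from the definition: the Poissonized RS tableaux are obtained by applying the deterministic decorated RS bijection $dRS$ to a rate~$1$ Poisson point process on $[0,\th]\times[0,\th]$. Since $dRS$ reads off the decorations as the sorted $y$- and $x$-coordinates of the points, and reads off $(L,R)$ as the ordinary RS image of the associated permutation $\si$, each of the three claims will follow from a standard property of i.i.d.\ uniform samples in the square, funneled through the conditioning-on-count property of Poisson point processes.

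First I would identify $N = |\sh(L)| = |\sh(R)|$ with the total number of points in the Poisson process. Since the square has area $\th^2$, this gives $N\sim \text{Poisson}(\th^2)$ immediately. The standard conditioning property of Poisson point processes says that, conditioned on $\{N = n\}$, the $n$ points $(X_1,Y_1),\ldots,(X_n,Y_n)$ are i.i.d.\ uniform on $[0,\th]^2$. Under this representation, Definition~\ref{associatedpermutation} identifies $(r_1,\ldots,r_n)$ as the order statistics of $X_1,\ldots,X_n$ and $(\ell_1,\ldots,\ell_n)$ as the order statistics of $Y_1,\ldots,Y_n$. Independence of these two tuples is then just the independence of the $X_i$'s from the $Y_i$'s, which holds because the marginals of the uniform distribution on $[0,\th]^2$ are independent.

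For the claim about $\si$, I would relabel the points so that $X_1<\ldots<X_n$ (this is an a.s.\ well-defined, $X$-only measurable operation, so after relabeling the $Y_i$'s remain i.i.d.\ uniform on $[0,\th]$ and jointly independent of the $X$-order statistics). With this labeling, Definition~\ref{associatedpermutation} gives that $\si(i)$ equals the rank of $Y_i$ among $Y_1,\ldots,Y_n$. The classical fact that for an i.i.d.\ continuous sample the rank vector is uniform on $S_n$ and independent of the order statistics $(Y_{(1)},\ldots,Y_{(n)})$ then yields uniformity of $\si$ and its independence from $(\ell_1,\ldots,\ell_n)$. Combined with the independence $\{X_i\}\perp\{Y_i\}$ from the previous step, this gives joint independence of $\si$ from both decoration tuples. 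There is no deep obstacle here; the lemma is a direct unwinding of Poisson conditioning together with a standard rank/order-statistic decomposition. The only bit requiring care is tracking that the relabeling by $X$-rank, being a function of the $X_i$'s alone, preserves the conditional i.i.d.\ structure of the $Y_i$'s so that the rank-statistics theorem applies.
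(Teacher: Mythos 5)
Your proposal is correct and follows essentially the same route as the paper: both reduce to the Poisson conditioning property (points i.i.d.\ uniform on the square given $N=n$), deduce independence of the two decoration tuples from the independence of the $x$- and $y$-marginals, and obtain uniformity and independence of $\si$ from the order-statistics/rank structure of an i.i.d.\ uniform sample. The only cosmetic difference is that you decompose the sample into order statistics plus a rank vector, whereas the paper runs the same fact forward by constructing the uniform sample as $\{(a_i,b_{\pi(i)})\}$ from independent order statistics and an independent uniform permutation $\pi$.
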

\begin{proof}
From the construction of the Poissonized RS process, $\abs{\la_{\tilde{L},\tilde{R}}(0)}$
is the number of points in the square $[0,\th]\times[0,\th]$ of a
rate 1 Poisson point process in the plane. Hence $N\sim Poisson(\th^{2})$
is clear. Conditioned on $\left\{ N=n\right\} $ the points of the
rate 1 Poisson process in question are uniformly distributed in the
square $[0,\th]\times[0,\th]$ (This is a general property of Poisson
point processes). Consequently, each point has an $x$-coordinate
and $y$-coordinate which are uniformly distributed in $[0,\th]$
and are independent of all the other coordinates. Hence, since the
decorations $\left(\ell_{1},\ld,\ell_{n}\right),\left(r_{1},\ld,r_{n}\right)$
consist of the sorted $y-$coordinates and $x$-coordinates respectively,
these decorations are independent of each other. (More specifically:
they have the distribution of the order statistics for a sample of
$n$ uniformly distributed points in $[0,\th]$.)

To see that the permutation $\si\in S_{n}$ associated with these points
is uniformly distributed in $S_{n}$ first notice that this is the
same as the permutation associated with the points of the Poisson
Point Process. Then notice that if $\left(a_{1},\ld,a_{n}\right),\left(b_{1},\ld,b_{n}\right)$
are independent drawings of the order statistics for a sample of $n$
uniformly distributed points in $[0,\th]$ and $\pi\in S_{n}$ is
drawn uniformly at random and independently of everything else, then
the points $\left\{ \left(a_{i},b_{\pi(i)}\right)\right\} _{i=1}^{n}$
is a sample of $n$ points chosen uniformly from $[0,\th]\times[0,\th]$.
This construction of the $n$ uniform points shows that the permutation
$\si$ is uniformly distributed and independent of both the $x$ and
$y$ coordinates.\end{proof}
\begin{cor}
\label{YTcor}
Recall the shorthand notations from Definition \ref{def:shorthands}. For any Young diagram $\nu$, we have
\[
\p\left( \Co \right)=\left(\frac{\dim(\nu)^{2}}{\abs{\nu}!}\right)\p\left(N=\abs{\nu}\right).
\]
\end{cor}
\begin{proof}
By construction, the Young diagram $\la_{\tilde{L},\tilde{R}}(0)$
is the common shape of the tableaux $(L,R)$. Hence $\Co = \left\{ \la_{\tilde{L},\tilde{R}}(0)=\nu\right\} =\left\{ N=\abs{\nu}\right\} \cap\left\{ \sh(L)=\nu\right\} $.
Notice that $\sh(L)$ depends only on the associated permutation $\si$,
whose conditional distribution is known here to be uniform in $S_{n}$
by Lemma \ref{conditioninglemma}. Since the RS correspondence is a
bijection, we have only to count the number of pairs of tableaux of
shape $\nu$. Hence:
\begin{eqnarray*}
\p\left( \Co \right) & = & \p\left(\sh(L)=\nu\given{N=\abs{\nu}}\right)\p\left(N=\abs{\nu}\right)\\
 & = & \left(\frac{\dim(\nu)^{2}}{\abs{\nu}!}\right)\p\left(N=\abs{\nu}\right).
\end{eqnarray*}
\end{proof}
\begin{lem}
\label{sizelemma} Recall the shorthand notations from Definition \ref{def:shorthands}. Consider the law of the process conditioned on the
event $C_0(\nu)$. The conditional
probability that $\la_{\tilde{L},\tilde{R}}$ has the correct \uline{sizes}
at times $t_{1}<.\ld<t_{n}\in[-\th,0]$ is given by
\begin{eqnarray*}
\p\left(\bigcap_{i=1}^{n} S_{t_i}\big( |\la^{(i)}| \big) \given{ C_0\big(\nu\big) }\right) & = & \frac{\prod_{i=0}^{n}P_{\th\left(t_{i+1}-t_{i}\right)}\left(\abs{\la^{(i+1)}}-\abs{\la^{(i)}}\right)}{\p\left(N=\abs{\nu}\right)},
\end{eqnarray*}
where $P_{r}(k)=e^{-r}\frac{r^{k}}{k!}$ is the Poisson probability
mass function.  An analogous formula holds for $\p\left(\bigcap_{i=1}^{m} \Ssi \given{\Co} \right)$. Moreover, we have the following type of conditional independence for the sizes at
times $-\th<t_{1}<.\ld<t_{n}<0$ and at times $0<s_{1}<.\ld<s_{m}<\th$:
$$
\p\left( \bigcap_{i=1}^{n} \Sti \cap \bigcap_{i=1}^{m} \Ssi \given{\Co} \right) = \p\left( \bigcap_{i=1}^{n} \Sti \given{\Co} \right)\cdot \p\left( \bigcap_{i=1}^{m} \Ssi \given{\Co} \right).
$$

\end{lem}
\begin{proof}
As in the previous lemma, we have $\Co =\left\{ N=\abs{\nu}\right\} \cap\left\{ \sh(L)=\nu\right\} $. Then consider:

\begin{eqnarray*}
 \p\left(\bigcap_{i=1}^{n}\Sti \cap \Co \right) & = & \p\left(\bigcap_{i=1}^{n} \Sti \cap \left\{ N=\abs{\nu}\right\} \cap\left\{ \sh(L)=\nu\right\} \right)\\
 & = & \p\left(\bigcap_{i=1}^{n} \Sti \cap\left\{ \sh(L)=\nu\right\} \given{N=\abs{\nu}}\right)\p\left(N=\abs{\nu}\right).
\end{eqnarray*}
But now, when conditioned on $\left\{N=\abs{\nu}\right\}$, the event $\bigcap_{i=1}^{n}\Sti$ and the event $\left\{ \sh(L)=\nu\right\} $ are independent. The former event depends only on the decorations $\left(\ell_{1},\ld,\ell_{n}\right)$ by the
definition of $\la_{\tilde{L},\tilde{R}}$ and since $t_{i}<0$, 
while the latter event depends only on the associated permutation $\si$,
and these are conditionally independent by Lemma \ref{conditioninglemma}.
Hence:

\begin{eqnarray*}
 &  & \p\left(\bigcap_{i=1}^{n} \Sti \cap\left\{ \sh(L)=\nu\right\} \given{N=\abs{\nu}}\right)\p\left(N=\abs{\nu}\right)\\
 & = & \p\left(\bigcap_{i=1}^{n} \Sti \given{N=\abs{\nu}}\right)\p\left(\sh(L)=\nu\given{N=\abs{\nu}}\right)\p\left(N=\abs{\nu}\right)\\
 & = & \p\left(\bigcap_{i=1}^{n} \Sti \given{N=\abs{\nu}}\right)\p\Big( \Co \Big).
\end{eqnarray*}
Putting the above displays together, we have:
\[
\p\left(\bigcap_{i=1}^{n} \Sti \given{\Co}\right)=\p\left(\bigcap_{i=1}^{n} \Sti \given{N=\abs{\nu}}\right).
\]
Now from the definition of the Young diagram process, we have for
$-\th<t<0$, that $\abs{\la_{\tilde{L},\tilde{R}}(t)}=\abs{\left\{ i:\ \ell_{i}<t+\th\right\} }$
and we see that the event $\bigcap_{i=1}^{n}\Sti \cap\left\{ N=\abs{\nu}\right\} $
depends only on counting the number of decorations from $\left(\ell_{1},\ld,\ell_{\abs{\nu}}\right)$
in the appropriate regions:

\begin{eqnarray*}
\bigcap_{i=1}^{n} \Sti \cap\left\{ N=\abs{\nu}\right\}  & = & \bigcap_{i=0}^{n+1}\left\{ \abs{\la_{\tilde{L},\tilde{R}}(t_{i})}=\abs{\la^{(i)}}\right\} \\
 & = & \bigcap_{i=0}^{n+1}\left\{ \left|\left\{ j\ :\ t_{i}+\th<\ell_{j}<t_{i+1}+\th\right\} \right|=\abs{\la^{(i+1)}}-\abs{\la^{(i)}}\right\}.
\end{eqnarray*}
Finally, from the construction, we notice that the random variable
$\abs{\left\{ j:t_{i}+\th<\ell_{j}<t_{i+1}+\th\right\} }$ counts
the number of points of the Poisson point process in the region $[t_{i}+\th,t_{i+1}+\th]\times[0,\th]$.
Consequently these random variables are independent for different
values of $i$ and are distributed according to 
\[
\abs{\left\{ j:t_{i}+\th<\ell_{j}<t_{i+1}+\th\right\} }\sim Poisson\left(\th\left(t_{i+1}-t_{i}\right)\right).
\]
This observation, together with the preceding display, gives the desired first result of the lemma.

To see the second result about the conditional independence at times
$-\th<t_{1}<.\ld<t_{n}<0$ and at times $0<s_{1}<.\ld<s_{m}<\th$,
we repeat the arguments above and notice that times $-\th<t_{1}<.\ld<t_{n}<0$
depend only on the decorations $(\ell_{1},\ld,\ell_{n})$ (because
of $\abs{\la_{\tilde{L},\tilde{R}}(t)}=\abs{\left\{ i:\ \ell_{i}<t+\th\right\} }$)
while the times $0<s_{1}<\ld<s_{m}<\th$ depend only on the decorations
$(r_{1},\ld,r_{m})$ (because of $\abs{\la_{\tilde{L},\tilde{R}}(s)}=\abs{\left\{ i:\ r_{i}<\th-s\right\} }$).
These decorations are conditionally independent when conditioned on
$\{N=\abs{\nu}\}$ by Lemma \ref{conditioninglemma} and the desired
independence result follows.\end{proof}
\begin{lem}
\label{shapelem}
Recall the shorthand notations from Definition \ref{def:shorthands}. We
have
\[
\p\left(\bigcap_{i=0}^{n+1}\Cti \given{\bigcap_{i=0}^{n+1} \Sti \cap \Co}\right)=\frac{\prod_{i=1}^{n+1}\dim\left(\la^{(i+1)}/\la^{(i)}\right)}{\dim(\nu)}.
\]

An analogous formula holds for $\p\left(\bigcap_{i=0}^{m+1} \Csi \given{\bigcap_{i=0}^{m+1} \Ssi \cap \Co}\right)$. Moreover, we have the following type of conditional independence at
times $-\th<t_{1}<.\ld<t_{n}<0$ and at times $0<s_{1}<.\ld<s_{m}<\th$:
\begin{eqnarray*}
 &  & \p\left(\bigcap_{i=0}^{n+1} \Cti  \cap \bigcap_{i=0}^{m+1} \Csi \given{\bigcap_{i=0}^{n+1} \Sti \cap\bigcap_{i=0}^{m+1} \Ssi \cap \Co }\right)\\
 & = & \p\left(\bigcap_{i=0}^{n+1}\Cti \given{\bigcap_{i=0}^{n+1} \Sti \cap \Co}\right)\cdot\p\left(\bigcap_{i=0}^{m+1} \Csi \given{\bigcap_{i=0}^{m+1} \Ssi \cap \Co}\right).
\end{eqnarray*}
\end{lem}
\begin{proof}
For a standard Young tableau $T$, and $a<b\in\bN$ we will denote by
$\sh(T_{a,b})$ the skew Young diagram which consists of the boxes
of $T$ which are labeled with an number $i$ so that $a\leq i\leq b$
and the empty Young diagram in the case $b<a$. With this notation,
we now notice that for $t<0$, that $C_{t}(\la) = \left\{ \la_{\tilde{L},\tilde{R}}(t)=\la\right\} = \left\{ \sh(L_{1,\abs{\la_{\tilde{L},\tilde{R}}(t)}})=\la\right\} $.
By the same token we have:
\[
\bigcap_{i=1}^{n+1}\Cti  = \bigcap_{i=0}^{n+1}\left\{ \sh(L_{1,\abs{\la_{\tilde{L},\tilde{R}}(t_{i})}})=\la^{(i)}\right\} = \bigcap_{i=0}^{n}\left\{ \sh(L_{\abs{\la_{\tilde{L},\tilde{R}}(t_{i})}+1,\abs{\la_{\tilde{L},\tilde{R}}(t_{i+1})}})=\la^{(i+1)}/\la^{(i)}\right\}. 
\]
Hence:
\begin{eqnarray*}
 &  & \p\left(\bigcap_{i=0}^{n+1}\Cti \given{\bigcap_{i=0}^{n+1}\Sti \cap \Co}\right)\\
 & = & \p\left(\bigcap_{i=0}^{n+1}\left\{ \sh(L_{\abs{\la_{\tilde{L},\tilde{R}}(t_{i})}+1,\abs{\la_{\tilde{L},\tilde{R}}(t_{i+1})}})=\la^{(i+1)}/\la^{(i)}\right\} \given{\bigcap_{i=0}^{n}\left\{ \abs{\la_{\tilde{L},\tilde{R}}(t_{i})}=\abs{\la^{(i)}}\right\} \cap \Co}\right)\\
 & = & \p\left(\bigcap_{i=0}^{n+1}\left\{ \sh(L_{\abs{\la^{(i)}}+1,\abs{\la^{(i+1)}}})=\la^{(i+1)}/\la^{(i)}\right\} \given{\bigcap_{i=0}^{n}\left\{ \abs{\la_{\tilde{L},\tilde{R}}(t_{i})}=\abs{\la^{(i)}}\right\} \cap \Co}\right).
\end{eqnarray*}
We now notice that this event depends only on the Young
tableau $L$, which is entirely determined by the associated permutation
$\si$ (via the RS algorithm). Consequently, the conditioning on $\bigcap_{i=0}^{n}\left\{ \abs{\la_{\tilde{L},\tilde{R}}(t_{i})}=\abs{\la^{(i)}}\right\} $,
which depends only on the decorations $\left(\ell_{1},\ld,\ell_{\abs{\la}}\right)$,
has no effect here since $\si$ and these decorations are conditionally
independent by Lemma \ref{conditioninglemma}. Removing this conditioning
on $\bigcap_{i=0}^{n}\left\{ \abs{\la_{\tilde{L},\tilde{R}}(t_{i})}=\abs{\la^{(i)}}\right\} $,
we remain with:
\[
\p\left(\bigcap_{i=0}^{n+1}\Cti \given{\bigcap_{i=0}^{n+1} \Sti \cap \Co }\right) = \p\left(\bigcap_{i=0}^{n+1}\left\{ \sh(L_{\abs{\la^{(i)}}+1,\abs{\la^{(i+1)}}})=\la^{(i+1)}/\la^{(i)}\right\} \given{\Co}\right).
\]
With this conditioning, since $\si$ is uniformly distributed by Lemma \ref{conditioninglemma},
and because the RS algorithm is a bijection, the Young tableau $L$
is uniformly distributed among the set of all Young tableau of shape
$\nu$. Hence it suffices to count the number of tableau of shape
$\nu$ with the correct intermediate shapes i.e. the tableaux $L$
that have $\sh(L_{\abs{\la^{(i)}}+1,\abs{\la^{(i+1)}}})=\la^{(i+1)}/\la^{(i)}$
for each $i$. Since each $L_{\abs{\la^{(i)}}+1,\abs{\la^{(i+1)}}}$must
itself be a standard Young tableau of skew shape $\la^{(i+1)}/\la^{(i)}$,
this is:
\begin{eqnarray*}
 &  & \left|\left\{ \text{S.Y.T. }L\ :\sh(L)=\nu,\ \sh\left(L_{\abs{\la^{(i)}}+1,\abs{\la^{(i+1)}}}\right)=\la^{(i+1)}/\la^{(i)}\ \forall1\leq i\leq n\right\} \right|\\
 & = & \dim(\la_{1}/\emptyset)\dim\left(\la_{2}/\la_{1}\right)\ld\dim\left(\la_{n}/\la_{n-1}\right)\dim\left(\nu/\la_{n}\right).
\end{eqnarray*}
 Dividing by $\dim(\nu),$ the total number of tableaux of shape $\nu$, gives the desired probability and completes the first result of
the lemma.

To see the second result about the conditional independence at times
$-\th<t_{1}<.\ld<t_{n}<0$ and at times $0<s_{1}<.\ld<s_{m}<\th$,
we repeat arguments analogous to the above to see that
\[
\p\left(\bigcap_{i=0}^{n+1}\Cti \cap \bigcap_{i=0}^{m+1} \Csi \given{\bigcap_{i=0}^{n+1} \Sti \cap \bigcap_{i=0}^{m+1} \Ssi \cap \Co}\right)  = \p\left(A_{L}\cap A_{R}\given{\Co}\right),
\]
where
\begin{eqnarray*}
  A_{L} &:=& \bigcap_{i=0}^{n+1}\left\{ \sh(L_{\abs{\la^{(i)}}+1,\abs{\la^{(i+1)}}})=\la^{(i+1)}/\la^{(i)}\right\} \\
  A_{R} &:=& \bigcap_{i=0}^{m+1}\left\{ \sh(R_{\abs{\mu^{(i)}}+1,\abs{\mu^{(i+1)}}})=\mu^{(i+1)}/\mu^{(i)}\right\}. 
\end{eqnarray*}
Now the event $A_{L}$ depends only on the left tableau $L$ while
the event $A_{R}$ depends only on the right tableau $R$. By Lemma \ref{conditioninglemma} along with the fact that the RS correspondence is a bijection, we know that under the conditioning on the event $\Co$ that the tableaux $L$ and $R$ are independent of each other (both are uniformly distributed among the set of Young tableau of shape $\nu$). Thus the events $A_{L}$
and $A_{R}$ are conditionally independent when conditioned on $\Co$, yielding the desired independence result.
\end{proof}

\begin{proof}
(Of Theorem \ref{SchurProcessThm}). The proof goes by carefully deconstructing the desired probability and using the conditional independence results from Lemma \ref{sizelemma} and Lemma \ref{shapelem}
until we reach an explicit
formula. Recall the shorthand notations from Definition \ref{def:shorthands}.  We have:
\begin{eqnarray*}
 &  & \p\left(\bigcap_{i=1}^{n}\Cti \cap \Co \cap \bigcap_{i=1}^{m} \Csi \right)\\
 & = & \p\left(\bigcap_{i=1}^{n}\Cti \cap \bigcap_{i=1}^{m} \Csi \given{\Co}\right)\p\left(\Co\right)\\
 & = & \p\left(\bigcap_{i=0}^{n} \Cti \cap \bigcap_{i=0}^{m} \Csi \given{\bigcap_{i=0}^{n+1} \Sti \cap \bigcap_{i=0}^{m+1} \Ssi \cap \Co}\right)\cdot\\
 &  & \cdot\p\left(\bigcap_{i=0}^{n} \Sti \cap \bigcap_{i=0}^{m} \Ssi \given{\Co}\right)\cdot\p\left(\Co\right)\\
 & = & \p\left(\bigcap_{i=0}^{n}\Cti \given{\bigcap_{i=0}^{n} \Sti \cap \Co}\right)\cdot \p\left(\bigcap_{i=0}^{m} \Csi \given{\bigcap_{i=0}^{m} \Ssi \cap \Co}\right)\cdot\\
 &  & \cdot\p\left(\bigcap_{i=0}^{n} \Sti \given{\Co}\right)\p\left(\bigcap_{i=0}^{m} \Ssi \given{\Co}\right)\cdot\p\left(\Co\right)\\
 & = & \left(\frac{\prod_{i=0}^{n}\dim\left(\la^{(i+1)}/\la^{(i)}\right)}{\dim(\nu)}\right)\cdot\left(\frac{\prod_{i=0}^{m}\dim\left(\mu^{(i)}/\mu^{(i+1)}\right)}{\dim(\nu)}\right)\cdot\\
 &  & \cdot\left(\frac{\prod_{i=0}^{n}P_{\th\left(t_{i+1}-t_{i}\right)}\left(\abs{\la^{(i+1)}}-\abs{\la^{(i)}}\right)}{\p\left(N=\abs{\nu}\right)}\right)\cdot\left(\frac{\prod_{i=0}^{m}P_{\th\left(s_{i+1}-s_{i}\right)}\left(\abs{\la^{(i+1)}}-\abs{\la^{(i)}}\right)}{\p\left(N=\abs{\nu}\right)}\right)\cdot\left(\left(\frac{\dim(\nu)^{2}}{\abs{\nu}!}\right)\p\left(N=\abs{\nu}\right)\right).
\end{eqnarray*}
We now use $\p(N=\abs{\nu})=P_{\th^{2}}(\abs{\nu})=e^{-\th^{2}}\frac{\th^{2\abs{\nu}}}{\abs{\nu}!}$ to simplify the result. The desired result follows after simplifying the product using the Poisson probability mass formula.
\end{proof}

\subsection{Proof of Theorem \ref{ArchesThm}}
\label{subsection_ArchesThm}
\begin{proof}
(Of Theorem \ref{ArchesThm}) The proof will proceed as follows: First, by an application of the \newline Karlin-MacGregor/Lindstr\"{o}m–Gessel–Viennot theorem and the Jacobi-Trudi identity for Schur functions to compute
the distribution of the Poisson arches in terms of Schur functions.
Then, by Theorem \ref{SchurProcessThm}, the right hand side is computed to be the same
expression. 

For convenience of notation, divide the times into two
parts, times $-\th<t_{1}<\ld<t_{n}<0$ and $0<s_{1}<\ld<s_{m}<\th$,
and put $t_{0}=-\th,t_{n+1}=0,s_{0}=0,s_{m+1}=\th$. Set target points
$\left\{ x_{i,j}\right\} _{1\leq i\leq n,1\leq j\leq N}$ , $\left\{ z_{j}\right\} _{1\leq j\leq N}$and
$\left\{ y_{i,j}\right\} _{1\leq i\leq m,1\leq j\leq N}$ and consider
the event $\left\{ \bigcap_{i=1}^{n}\left(\bigcap_{j=1}^{N}\left\{ A(j;t_{i})=x_{i,j}\right\} \right)\cap\bigcap_{i=1}^{m}\left(\bigcap_{j=1}^{N}\left\{ A(j;s_{i})=y_{i,j}\right\} \right)\right\} $.
To each of the fixed time ``slices'', we Young diagram $\left\{ \la^{(i)}\right\} _{1\leq i\leq n},\left\{ \mu^{(i)}\right\} _{1\leq i\leq m}$
and $\nu$ by prescribing the length of the rows: 
\[
\la_{k}^{(i)}=\begin{cases}
x_{i,k}+k & 1\leq k\leq N\\
0 & k>N
\end{cases},\ \nu_{k}=\begin{cases}
z_{k}+k & 1\leq k\leq N\\
0 & k>N
\end{cases},\ \mu_{k}^{(i)}=\begin{cases}
y_{i,k}+k & 1\leq k\leq N\\
0 & k>N
\end{cases}.
\]
Reuse the same conventions as from Theorem \ref{SchurProcessThm}, $\la^{(0)}=\emptyset,\la^{(n+1)}=\nu,\mu^{(0)}=\nu,\mu^{(m+1)}=\emptyset$
and $t_{0}=-\th,t_{n+1}=0,s_{0}=0,s_{m+1}=\th$. Notice that by the
definitions, $\la^{(i)}$ and $\mu^{(j)}$ are always Young diagrams
with at most $N$ non-empty rows. Moreover, the admissible target
points are exactly in bijection with the space $\bY(N)$ of Young
diagrams with at most $N$ non-empty rows.

By application of the Karlin-MacGregor theorem \cite{karlin1959} / Lindstr\"{o}m–Gessel–Viennot theorem \cite{GesselViennot}  , for the law of non-intersecting random walks, we have that: 
\begin{eqnarray*}
 &  & \p\left(\bigcap_{i=1}^{n}\left(\bigcap_{j=1}^{N}\left\{ A(j;t_{i})=x_{i,j}\right\} \right)\cap\bigcap_{i=1}^{m}\left(\bigcap_{j=1}^{N}\left\{ A(j;s_{i})=y_{i,j}\right\} \right)\right)\\
 & = & Z_{t_{1},\ld t_{n}s_{1},\ld s_{m}}^{-1}\prod_{i=1}^{n+1}\det\left(W_{t_{i}-t_{i-1}}^{+}(x_{i-1,a},x_{i,b})\right)_{1\leq a,b\leq N}\cdot\prod_{i=1}^{m+1}\det\left(W_{s_{i}-s_{i-1}}^{-}(y_{i-1,a},y_{i,b})\right)_{1\leq a,b\leq N}.
\end{eqnarray*}
Here the weights $W_{t}^{+}$and $W_{s}^{-}$ are Poisson \uline{weights}
for an increasing/decreasing Poisson process:
\begin{eqnarray*}
W_{t}^{+}\left(x,y\right) & = & \frac{t^{(y-x)}}{(y-x)!}\one_{\left\{ y>x\right\} }\ , \ W_{t}^{-}=\frac{t^{(x-y)}}{(x-y)!}\one_{\left\{ x>y\right\} }.
\end{eqnarray*}

(We can safely ignore the factor of $e^{-t}$ that appears in the
transition probabilities as long as we are consist with this convention
when we compute the normalizing constant $Z_{t_{1},\ld s_{m}}$ too.)
We will now use some elementary facts from the theory of symmetric
functions to simplify the result (see \cite{StanleyVol2} or \cite{0387950672}).
Firstly, we use the following identity for the complete homogenous
symmetric functions, specialized to the exponential specialization
of parameter $t$, namely:
\[
h_{n}(\rho_{t})=\frac{t{}^{n}}{n!}.
\]
With this in hand, we notice that $W_{t}^{+}(x,y)=h_{y-x}\left(\rho_{t}\right)$
and $W_{s}^{-}(x,y)=h_{x-y}(\rho_{t})$. Hence by the Jacobi-Trudi
identity (see again \cite{StanleyVol2} or \cite{0387950672}) we
have:
\begin{eqnarray*}
\det\left(W_{t}^{+}(x_{i-1,a},x_{i,b})\right)_{1\le a,b\leq N} & = & \det\left(h_{x_{i,b}-x_{i-1,a}}\left(\rho_{t}\right)\right)_{1\leq a,b\leq N}\\
 & = & \det\left(h_{\left(\la_{b}^{(i)}-b\right)-\left(\la_{a}^{(i-1)}-a\right)}\left(\rho_{t}\right)\right)_{1\leq a,b\leq N}\\
 & = & s_{\la^{(i)}/\la^{(i-1)}}(\rho_{t}).
\end{eqnarray*}
Similarly, we have
\[
\det\left(W_{t}^{+}(x_{i-1,a},x_{i,b})\right)_{1\le a,b\leq N}=s_{\mu^{(i)}/\mu^{(i-1)}}\left(\rho_{t}\right).
\]
Thus
\begin{eqnarray*}
 &  & \p\left(\bigcap_{i=1}^{n}\left(\bigcap_{j=1}^{N}\left\{ A(j;t_{i})=x_{i,j}\right\} \right)\cap\bigcap_{i=1}^{m}\left(\bigcap_{j=1}^{N}\left\{ A(j;s_{i})=y_{i,j}\right\} \right)\right)\\
 & = & Z_{t_{1},\ld t_{n}s_{1},\ld s_{m}}^{-1}\left(\prod_{i=0}^{n}s_{\la^{(i+1)}/\la^{(i)}}\left(\rho_{t_{i+1}-t_{i}}\right)\right)\cdot\left(\prod_{i=0}^{m}s_{\mu^{(i)}/\mu^{(i+1)}}\left(\rho_{s_{i+1}-s_{i}}\right)\right).
\end{eqnarray*}

We now recognize from the statement of Theorem \ref{SchurProcessThm}, that this is exactly the probability of the Young diagram process $\la_{\tilde{L},\tilde{R}}$
passing through the Young diagrams $\left\{ \la^{(i)}\right\} _{1\leq i\leq n}$
and $\left\{ \mu^{(j)}\right\} _{1\leq j\leq m}$ at the appropriate
times except for the constant factor
of $Z_{t_{1},\ld t_{n}s_{1},\ld s_{m}}^{-1}\exp\left(-\th^{2}\right)$
. By the construction of the non-intersecting line ensemble in
terms of the Young diagram process, this is exactly the same as the
first $N$ lines of the non-intersecting line ensemble hitting the
targets $\left\{ x_{i,j}\right\} $ and $\left\{ y_{i,j}\right\} $
at the appropriate times and, since these Young diagrams have
at most $N$ non-empty rows, the remaining rows must be trivial: 

\begin{eqnarray*}
 &  & Z_{t_{1},\ld t_{n}s_{1},\ld s_{m}}^{-1} \left(\prod_{i=0}^{n}s_{\la^{(i+1)}\backslash\la^{(i)}}\left(\rho_{t_{i+1}-t_{i}}\right)\right)\cdot\left(\prod_{i=0}^{m}s_{\mu^{(i)}\backslash\mu^{(i+1)}}\left(\rho_{s_{i+1}-s_{i}}\right)\right)\\
 & = & \p\left(\bigcap_{i=1}^{n} \left\{ \la_{\tilde{L},\tilde{R}}(t_{i})=\la^{(i)}\right\} \cap\left\{ \la_{\tilde{L}.\tilde{R}}(0)=\nu\right\} \cap\bigcap_{j=1}^{m}\left\{ \la_{\tilde{L},\tilde{R}}(s_{j})=\mu^{(j)}\right\} \right)\\
 & = & \p\left(\bigcap_{i=1}^{n}\left(\bigcap_{j=1}^{N}\left\{ M_{\tilde{L},\tilde{R}}(j;t_{i})=x_{i,j}\right\} \right)\cap\bigcap_{i=1}^{m}\left(\bigcap_{j=1}^{N}\left\{ M_{\tilde{L},\tilde{R}}(j;s_{i})=y_{i,j}\right\} \right)\cap\left\{ M_{\tilde{L},\tilde{R}}(k;\cdot)=-k\ \forall k>N\right\} \right).
\end{eqnarray*}
The constant $Z_{t_{1},\ld t_{n}s_{1},\ld s_{m}}$ can be calculated
as a sum over all possible paths the non-crossing arches can take.
By our above calculation, this is the following sum over all possible
sequences of Young diagrams $\left\{ \al^{(i)}\right\} _{i=1}^{n+1}\subset\bY(N)$,
$\left\{ \be^{(i)}\right\} _{i=1}^{m+1}\subset\bY(N)$, $\nu=\al^{(n+1)}=\be^{(m)}$,
which have at most $N$ non-empty rows:
\begin{eqnarray*}
Z_{t_{1},\ld t_{n}s_{1},\ld s_{m}} & = & \sum_{\left\{ \left\{ \al^{(i)}\right\} ,\left\{ \be^{(j)}\right\} \right\} }\left(\prod_{i=0}^{n+1}s_{\al^{(i)}/\al^{(i-1)}}(\rho_{t_{i+1}-t_{i}})\right)\left(\prod_{i=0}^{m+1}s_{\be^{(i)}/\be^{(i-1)}}(\rho_{s_{i+1}-s_{i}})\right).
\end{eqnarray*}
Again, by Theorem \ref{SchurProcessThm}, except up to a constant
factor $\exp\left(-\th^{2}\right)$, this can be interpreted as a
probability for the Young diagram process $\la_{\tilde{L},\tilde{R}}$
or the line ensemble $M_{\tilde{L},\tilde{R}}$. Because we sum over
all possibilities for the first $N$ rows, we remain only with the
probability that the Young diagram process $\la_{\tilde{L},\tilde{R}}$
never has more than $N$ non-trivial rows, or equivalently that all
the line ensemble remains still for all $k>N$:
\begin{eqnarray*}
Z_{t_{1},\ld t_{n}s_{1},\ld s_{m}} & \propto & \sum_{\left\{ \left\{ \al^{(i)}\right\} ,\left\{ \be^{(j)}\right\} \right\} }\p\left(\bigcap_{i=1}^{n}\left\{ \la_{\tilde{L},\tilde{R}}(t_{i})=\al^{(i)}\right\} \bigcap\left\{ \la_{\tilde{L}.\tilde{R}}(0)=\nu\right\} \bigcap_{j=1}^{m}\left\{ \la_{\tilde{L},\tilde{R}}(s_{j})=\be^{(j)}\right\} \right)\\
 & = & \p\left(\la_{\tilde{L},\tilde{R}}(\cdot)\text{ has at most }N\text{ non-empty rows}\right)\\
 & = & \p\left(M_{\tilde{L},\tilde{R}}(k;\cdot)=-k\ \forall k>N\right).
\end{eqnarray*}
Combining the two calculations, we see that the two factors of $\exp\left(-\th^{2}\right)$
cancel and we remain with:
\begin{eqnarray*}
 &  & \p\left(\bigcap_{i=1}^{n}\left(\bigcap_{j=1}^{N}\left\{ A(j;t_{i})=x_{i,j}\right\} \right)\cap\bigcap_{i=1}^{m}\left(\bigcap_{j=1}^{N}\left\{ A(j;s_{i})=y_{i,j}\right\} \right)\right)\\
 & = & \frac{\p\left(\bigcap_{i=1}^{n}\left(\bigcap_{j=1}^{N}\left\{ M_{\tilde{L},\tilde{R}}(j;t_{i})=x_{i,j}\right\} \right)\cap\bigcap_{i=1}^{m}\left(\bigcap_{j=1}^{N}\left\{ M_{\tilde{L},\tilde{R}}(j;s_{i})=y_{i,j}\right\} \right)\cap\left\{ M_{\tilde{L},\tilde{R}}(k;\cdot)=-k\ \forall k>N\right\} \right)}{\p\left(M_{\tilde{L},\tilde{R}}(k;\cdot)=-k\ \forall k>N\right)}\\
 & = & \p\left(\bigcap_{i=1}^{n}\left(\bigcap_{j=1}^{N}\left\{ M_{\tilde{L},\tilde{R}}(j;t_{i})=x_{i,j}\right\} \right)\cap\bigcap_{i=1}^{m}\left(\bigcap_{j=1}^{N}\left\{ M_{\tilde{L},\tilde{R}}(j;s_{i})=y_{i,j}\right\} \right)\given{M_{\tilde{L},\tilde{R}}(k;\cdot)=-k\ \forall k>N}\right),
\end{eqnarray*}
as desired.
\end{proof}

\section{Relationship to Stochastic Dynamics on Partitions}

In this section we show that the Poissonized RS process can
be understood as a special case of certain stochastic dynamics on
partitions introduced by Borodin and Olshanski in \cite{Borodin_stochasticdynamics}. 
\begin{thm}
\label{curve_thm}Let $\left(u(t),v(t)\right)$, $t\in[-\th,\th]$
be the parametric curve in $\bR_{+}^{2}$ given by:
\[
u(t)=\begin{cases}
\th & t\leq0\\
\th-\abs t & t\geq0
\end{cases}\ ,\ v(t)=\begin{cases}
\th-\abs t & t\leq0\\
\th & t\geq0
\end{cases}.
\]
Also let $\square(u(t),v(t))$ be the rectangular region $[0,u(t)]\times[0,v(t)]\subset\bR_{+}^{2}$. (See Figure \ref{fig:curve_cartoon} for an illustration of this setup).

For any point configuration $\Pi\in\cC^{\th}$, let $\pi(t)$ be the
permutation associated with the point configuration $\Pi\cap\mbox{\ensuremath{\square}}[u(t),v(t)]$
(as in Definition \ref{associatedpermutation}) and let $\la_{\Pi}(t)=\sh\left(RS(\pi(t)\right)$
be the \uline{shape} of the Young tableau that one gets by applying
the ordinary RS bijection to the permutation $\pi(t)$. 

If $(\tilde{L},\tilde{R})=dRS(\Pi)$ is the decorated Young tableau
that one gets by applying the decorated RS bijection to the configuration
$\Pi$, then the Young diagram process of $(\tilde{L},\tilde{R})$
is exactly $\la_{\Pi}(t)$: 
\[
\la_{\tilde{L},\tilde{R}}(t)=\la_{\Pi}(t)\ \forall t\in[-\th,\th]
\].
\end{thm}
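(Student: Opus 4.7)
The plan is to split the analysis at $t=0$, since the rectangle $\square(u(t), v(t))$ shrinks only in the $y$-direction for $t \leq 0$ and only in the $x$-direction for $t \geq 0$. These two regimes translate naturally into constraints on the left tableau $\tilde L$ and the right tableau $\tilde R$ respectively, and in each case I will identify $\la_\Pi(t)$ with the appropriate marginal of the Young diagram process directly from the Robinson--Schensted insertion procedure.

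For $t \geq 0$, the set $\Pi \cap \square(u(t), v(t))$ consists of exactly those points with $x$-coordinate at most $\th - t$. Listing $\Pi$ in order of $x$-coordinate, these are the first $m$ points, where $m = |\{k : r_k \leq \th - t\}|$. Since the ordinary RS algorithm applied to all of $\Pi$ in $x$-order produces $(L, R)$ by successive insertions, the shape after exactly the first $m$ insertions is by construction the set $\{(i,j) : R(i,j) \leq m\}$, and this is $\la_\Pi(t)$. Rewriting the condition as $r_{R(i,j)} \leq \th - t$, this equals $\la_{\tilde R}(\th - t) = \la_{\tilde L, \tilde R}(t)$ by the definition of the Young diagram process; so this half is essentially immediate from unpacking the definitions.

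For $t \leq 0$, the relevant points are those with $y$-coordinate at most $\th + t$. The key tool here is the classical restriction (or ``subtableau'') property of the RS correspondence: when inserting a sequence of distinct values, the cells of the resulting insertion tableau whose entries are at most a threshold $c$ form a Young sub-diagram, and the corresponding filling is precisely the insertion tableau of the subsequence of values not exceeding $c$. Applying this to our sequence of $y$-coordinates listed in $x$-order, with threshold $c = \th + t$, the induced subsequence is exactly (up to order-preserving relabeling) the permutation $\pi(t)$, and its insertion tableau therefore has shape $\{(i,j) : L(i,j) \leq k\}$, where $k = |\{j : \ell_j \leq \th + t\}|$. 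Since $\ell_{L(i,j)} \leq \th + t$ iff $L(i,j) \leq k$, this shape equals $\la_{\tilde L}(\th + t) = \la_{\tilde L, \tilde R}(t)$, completing the identification.

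The main obstacle is the restriction property of RS used in the $t \leq 0$ half. This is classical (see for instance \cite{StanleyVol2}) and follows by an easy induction on insertion steps: Schensted bumping only compares the newly inserted element with existing tableau entries, so elements exceeding a threshold $c$ never interact with the bumping chains initiated by elements below $c$, and vice versa. The endpoint cases $t = \pm\th$ are trivial since the rectangle degenerates to a segment of zero area and both sides of the claimed equality are empty.
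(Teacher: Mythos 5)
Your proof is correct, and the $t\geq 0$ half coincides with the paper's argument (the recording tableau $R$ records the insertion order, so the shape after $m$ insertions is $\left\{(i,j):R(i,j)\leq m\right\}$, which unpacks to $\la_{\tilde R}(\th-t)$). For $t\leq 0$ you take a genuinely different route: the paper reflects the configuration $\Pi$ across the diagonal $x=y$ and invokes the symmetry theorem $RS(\si)=(L,R)\iff RS(\si^{-1})=(R,L)$ to reduce the left half to the already-proved right half with the roles of $L$ and $R$ exchanged, whereas you work directly with the insertion tableau $L$ and use the restriction property of Schensted insertion (insertion commutes with restriction to an initial segment of the alphabet of inserted values). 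Both underlying facts are classical; your lemma is the more elementary of the two, being provable by a short induction on insertion steps, while the symmetry theorem is a deeper statement, so your argument is arguably more self-contained --- at the cost of not reusing the $t\geq 0$ computation verbatim as the paper does. One small caveat: your one-line justification of the restriction lemma is stated too strongly, since an inserted element $\leq c$ \emph{can} bump an element $>c$ out of a row (when it exceeds every $\leq c$ entry already present there); the correct statement is that once a bumping chain crosses into the $>c$ region it never returns, so the sub-tableau of entries $\leq c$ evolves exactly as if the $>c$ entries were absent. Since the lemma itself is standard and correctly cited, this does not affect the validity of the proof.
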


\begin{proof}
For concreteness, let us suppose there are $n$ points in the configuration
$\Pi$ and label them $\left\{ (x_{i},y_{i})\right\} _{i=1}^{n}$
sorted in ascending order of $x$-coordinate. Also label $\si=\pi(0)\in S_{n}$
be the permutation associated to the configuration $\Pi$ and let
$(\tilde{L},\tilde{R})=\left(L,(y_{\si^{-1}(1)},\ld,y_{\si^{-1}(n)}),R,\left(x_{1},\ld,x_{n}\right)\right)$
be the output of the decorated RS correspondence. 

We prove first the case $t\in[0,\th]$, then use a symmetry property
of the RS algorithm to deduce the result for $t\in[-\th,0]$. Fix
a $k$ with $1\leq k\leq n$ and let us restrict our attention to
times $t\in[0,\th]$ for which $x_{k-1}<\th-t\leq x_{k}$. By the
definition of the Young diagram process, we have then by this choice
of $t$ that
\begin{eqnarray*}
\la_{\tilde{L},\tilde{R}}(t) & = & \left\{ (i,j):x_{R(i,j)}\leq\th-t\right\} \\
 & = & \left\{ (i,j):R(i,j)\leq k\right\}. 
\end{eqnarray*}
Now, by the definition of the decorated RS correspondence, the pair
of tableaux $(L,R)$ correspond to the permutation $\si$ when one
applies the ordinary RS algorithm. Since $R$ is the recording tableau
here, the set $\left\{ (i,j):R(i,j)\leq k\right\} $ is exactly the
shape of the tableaux in the RS algorithm after $k$ steps of the
algorithm. At this point the algorithm has used only comparisons between
the numbers $\si(1),\si(2),\ld,\si(k)$; it has not seen any other
numbers yet.

On the other hand, we have $\mbox{\ensuremath{\square}}\left(u(t),v(t)\right)=\left\{ (x_{i},y_{i})\in\Pi:\ x_{i}<\th-t\right\} =\left\{ (x_{i},y_{i})\right\} _{i=1}^{k}$
by the choice $x_{k-1}<\th-t\leq x_{k}$ and since $x_{i}$ are sorted.
So $\la_{u,v}(t)=\sh\left(RSK(\pi(t)\right)$ is the shape outputted
by the RS algorithm after it has worked on the permutation $\pi(t)\in S_{k}$
using comparisons between the numbers $\pi(t)(1),\pi(t)(2),\ld,\pi(t)(k)$. 

But we now notice that for $1\leq i,j\leq k$ that $\pi(t)(i)<\pi(t)(j)$
if and only $\si(i)<\si(j)$ since they both happen if and only if
$y_{i}<y_{j}$. Hence, in computing $\la_{u,v}(t)$, the RS algorithm
makes the exact same comparisons as the first $k$ steps of the RS
algorithm on the list $\si(1),\si(2),\ld,\si(k)$. For this reason,
$\la_{\Pi}(t)=\left\{ (i,j):R(i,j)\leq k\right\} $. Hence $\la_{\tilde{L},\tilde{R}}(t)=\la_{\Pi}(t)$,
as desired. Since this works for any choice of $k$, this covers all
of $t\in[0,\th]$

To handle $t\in[-\th,0]$ consider as follows. Let $\Pi^{T}$ be the
reflection of the point configuration $\Pi$ about the line $x=y$,
in other words swapping the $x$ and $y$ coordinates of every point.
Then the permutation associated with $\Pi^{T}$ is $\si^{-1}$. Using
the remarkable fact of the RS correspondence $RS(\si)=(L,R)\iff RS(\si^{-1})=(R,L)$
we will have from the definition that $dRS(\Pi^{T})=(\tilde{R},\tilde{L})=\left(R,\left(x_{1},\ld,x_{n}\right),L,(y_{\si^{-1}(1)},\ld,y_{\si^{-1}(n)})\right)$. 

Using the result for $t\in[0,\th]$ applied to the configuration $\Pi$,
we have $\la_{\tilde{R},\tilde{L}}(t)=\la_{\Pi^{T}}(t)$ for all $t\in[0,\th]$.
Now, $\la_{\tilde{R},\tilde{L}}(t)=\la_{\tilde{L},\tilde{R}}(-t)$
follows from the definition \ref{YDprocess_pair}. It is also true
that $\la_{\Pi^{T}}(t)=\la_{\Pi}(-t)$; this follows from the fact
that $\left(\mbox{\ensuremath{\square}}\left(u(t),v(t)\right)\right)^{T}=\mbox{\ensuremath{\square}}\left(u(-t),v(-t)\right)$
as regions in the plane $\bR_{+}^{2}$ and so the permutations at
time $t$ will have $\si_{\Pi}(t)=\left(\si_{\Pi^{-1}}(-t)\right)^{-1}$.
Since the RS correspondence assigns the same shape to the inverse
permutation have $\la_{\Pi^{T}}(t)=\la_{\Pi}(-t)$. Hence we conclude
$\la_{\tilde{L},\tilde{R}}(t)=\la_{\Pi}(t)$ for $t\in[-\th,0]$ too.
\end{proof}

\begin{figure}
\centering
\begin{subfigure}[b]{.3\textwidth}
\begin{center}
\begin{tikzpicture}[scale = 2.5]
			\coordinate (BL) at (0,0);
			\coordinate (BR) at (1,0);
			\coordinate (TL) at (0,1);
			\coordinate (TR) at (1,1);
			\coordinate (Xmax) at (1.3,0);
			\coordinate (Ymax) at (0,1.3);
			\coordinate (P) at (1.0,0.6);

			\fill[fill=gray!40!white] (0,0) rectangle (P)
				node[anchor=west]{$\big(u(t),v(t)\big)$};

\fill (P) circle[radius=0.5pt];
							
			\node[anchor=west] at (TR) {$(\theta,\theta)$};
				
			\draw [black, ->, thick] (BL) -- (Xmax);
			\draw [black, ->, thick] (BL) -- (Ymax);
			\draw [black] (TR) -- (BR)
				 node[anchor=north]{$(\theta,0)$};			
			\draw [black] (TR) -- (TL)
				 node[anchor=east]{$(0,\theta)$};
			
		\end{tikzpicture}
		\end{center}
		\caption{ $t <0 $}
\end{subfigure}		
\hspace{0.15\textwidth}
\begin{subfigure}[b]{.3\textwidth}
\begin{center}
\begin{tikzpicture}[scale = 2.5]
			\coordinate (BL) at (0,0);
			\coordinate (BR) at (1,0);
			\coordinate (TL) at (0,1);
			\coordinate (TR) at (1,1);
			\coordinate (Xmax) at (1.3,0);
			\coordinate (Ymax) at (0,1.3);
			\coordinate (P) at (0.7,1.0);

			\fill[fill=gray!40!white] (0,0) rectangle (P)
				node[anchor=south]{$\big(u(t),v(t)\big)$};

\fill (P) circle[radius=0.5pt];
							
			\node[anchor=west] at (TR) {$(\theta,\theta)$};
				
			\draw [black, ->, thick] (BL) -- (Xmax);
			\draw [black, ->, thick] (BL) -- (Ymax);
			\draw [black] (TR) -- (BR)
				 node[anchor=north]{$(\theta,0)$};			
			\draw [black] (TR) -- (TL)
				 node[anchor=east]{$(0,\theta)$};
			
		\end{tikzpicture}
		\end{center}
		\caption{ $t  > 0 $}
\end{subfigure}
		
		\caption{ The point $\big( u(t),v(t) \big), t\in [-\th,\th]$ moves ``counterclockwise'' around the outer boundary of the square $[0,\th]\times[0,\th]$, starting from $(\th,0)$ at $t = -\th$, then moving to $(\th,\th)$ at $t=0$, and finally moving to $(0,\th)$ at $t = \th$. The region $\square\big(u(t),v(t)\big)$ is the shaded rectangular area bounded between $(0,0)$ and $\big( u(t),v(t) \big)$.}
		
		\label{fig:curve_cartoon}
\end{figure}

\begin{rem}
This is exactly the same construction of the random trajectories $\la_{\Pi}(u(t),v(t))$
from Theorem 2.3 in \cite{Borodin_stochasticdynamics}. Note that
in this work, the curve $\left(u(t),v(t)\right)$ was going the other
way going ``clockwise'' around the outside edge of the box $[0,\th]\times[0,\th]$
rather than ``counterclockwise'' as we have here. This difference
just arises from the convention of putting the recording tableau as
the right tableau when applying the RS algorithm and makes no practical
difference. Since our construction is a special case of the stochastic
dynamics constructed from this paper, we can use the scaling limit
results to compute the limiting behavior of the Poissonized RS
tableaux. The only obstruction is that one has to do some change of
time coordinate to translate to what is called ``interior time''
in \cite{Borodin_stochasticdynamics} along the curve so that $s=\half\left(\ln u-\ln v\right)$.
In the below corollary, we record the scaling limit for the topmost
line of the ensemble. \end{rem}
\begin{cor}
Let $\left(\tilde{L}_{\th},\tilde{R}_{\th}\right)$ be the Poissonized
Robinson-Schensted tableaux of parameter $\th$. If we scale around
the point $\tau=0$, there is convergence to the Airy 2 process on
the time scale $\th^{2/3}$, namely:
\[
\frac{M_{\tilde{L}_{\th},\tilde{R}_{\th}}\left(1;2\th^{2/3}\tau\right)-\left(2\th-2\th^{2/3}\abs{\tau}\right)}{\th^{1/3}}+\tau^{2}\To\cA_{2}(\tau),
\]
where $\cA_{2}(\cdot)$ is the Airy 2 process.\end{cor}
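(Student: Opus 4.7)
The plan is to reduce the statement to a scaling limit already proved in \cite{Borodin_stochasticdynamics}, using Theorem \ref{curve_thm} to identify the Poissonized RS line ensemble with the stochastic dynamics studied there. By Theorem \ref{curve_thm}, the top line $M_{\tilde{L}_\th,\tilde{R}_\th}(1;\cdot)$ is, up to reparameterization, the top line of the random Young diagram $\la_\Pi(t)=\sh(RS(\pi(t)))$ associated with the rectangle $\square(u(t),v(t))$ traversed as in Theorem \ref{curve_thm}. In particular, for each fixed $t$, $\la_\Pi(t)$ has the Poissonized Plancherel law with parameter $\alpha(t)=\sqrt{u(t)v(t)}$, and as $t$ varies the process is precisely one of the trajectories considered in Theorem 2.3 of \cite{Borodin_stochasticdynamics}.

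First I would compute $\alpha(t)$ under the scaling $t=2\th^{2/3}\tau$. For $\tau\geq 0$ one has $u(t)=\th-2\th^{2/3}\tau$, $v(t)=\th$, so
\[
\alpha(t)=\th\sqrt{1-2\th^{-1/3}\tau}=\th-\th^{2/3}\tau-\tfrac{1}{2}\th^{1/3}\tau^{2}+O(\th^{0}),
\]
and the same holds with $\abs{\tau}$ in place of $\tau$ for $\tau\leq 0$. Consequently the natural Tracy--Widom centering at time $\tau$ is
\[
2\alpha(t)=2\th-2\th^{2/3}\abs{\tau}-\th^{1/3}\tau^{2}+o(\th^{1/3}),
\]
and the natural fluctuation scale $\alpha(t)^{1/3}\to\th^{1/3}$. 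This explains both the linear centering $2\th-2\th^{2/3}\abs{\tau}$ in the statement and the additive $+\tau^{2}$ correction: they together recover the exact Poissonized Plancherel centering $2\alpha(t)$, which is what produces a limit with no linear or quadratic drift.

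Next I would invoke the scaling limit for the topmost line from \cite{Borodin_stochasticdynamics} along the curve $(u(t),v(t))$. Their result is most naturally stated in the \emph{interior time} coordinate $s=\tfrac{1}{2}(\ln u-\ln v)$; near our reference point $\tau=0$ this coordinate satisfies $s\asymp -\th^{-1/3}\tau$ for $\tau>0$ (and symmetrically for $\tau<0$), so the $\th^{2/3}$ time rescaling on our $\tau$ corresponds exactly to the order $\th^{1/3}$ rescaling of interior time that produces the Airy$_2$ scaling. Applying their scaling theorem and converting centering and fluctuation scales back to $\tau$ yields
\[
\frac{M_{\tilde{L}_\th,\tilde{R}_\th}(1;2\th^{2/3}\tau)-2\alpha(2\th^{2/3}\tau)}{\th^{1/3}}\To\cA_{2}(\tau).
\]
Combining this with the expansion of $2\alpha$ above and moving the quadratic correction to the left gives the claim.

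The main obstacle is the change of variables: one must be careful that the Borodin--Olshanski scaling limit, stated in their interior time coordinate $s$ with their conventions on the direction of traversal, really does produce the Airy$_2$ process on the time scale $\th^{2/3}$ in our $\tau$ variable with exactly the centering and fluctuation normalizations above. In particular, the small asymmetry coming from traversing the curve ``counterclockwise'' here versus ``clockwise'' in \cite{Borodin_stochasticdynamics} (noted in the remark preceding the corollary) must be accounted for, and the absolute value $\abs{\tau}$ in the centering (reflecting the non-smoothness of $(u(t),v(t))$ at $t=0$) must be reconciled with the smoothness of $\cA_{2}$; this works because the subtracted leading term is itself $|\tau|$ but the quadratic correction $\tau^{2}$ supplied on the left-hand side is smooth, so that what converges is a process whose centering is analytic through $\tau=0$.
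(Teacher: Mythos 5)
Your proposal is correct and follows essentially the same route as the paper: identify the line ensemble with the Borodin--Olshanski dynamics via Theorem \ref{curve_thm}, pass to the interior time coordinate $s=\tfrac{1}{2}(\ln u-\ln v)\approx-\th^{-1/3}\tau$, invoke their Airy$_2$ scaling theorem for the top line centered at $2\sqrt{u v}$, and Taylor expand $2\sqrt{uv}=2\th-2\th^{2/3}\abs{\tau}-\th^{1/3}\tau^{2}+o(\th^{1/3})$ to extract the stated centering and the $+\tau^{2}$ correction. The only (purely notational) difference is that the paper first substitutes $\al=\th^{2}$ so that the rectangle has area $\al$ at time $0$ before applying Theorem 4.4 of \cite{Borodin_stochasticdynamics}, whereas you work directly in $\th$.
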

\begin{proof}
We first do a change of variables in the parameter $\th$ by $\al=\th^{2}$
so that the curves we consider have area $u_{\al}(0)v_{\al}(0)=\al$
at time $0$. We will use $s$ to denote ``interior time'' along
the curve $(u_{\al},v_{\al})$ constructed in Theorem \ref{curve_thm}
(see Remark 1.4 in \cite{Borodin_stochasticdynamics} for an explanation
of the interior time). This is a change of time variable $s=s(t)$
given by
\[
s(t)=\half\left(\ln u_{\al}(t)-\ln v_{\al}(t)\right)=\half\sgn\left(t\right)\ln\left(1-\frac{\abs t}{\sqrt{\al}}\right).
\]
Notice by Taylor expansion now that $s(2\al^{1/3}\tau)=-\tau\al^{-1/6}+o(\al^{-1/6})$
as $\al\to\infty$. By application of Theorem 4.4. from \cite{Borodin_stochasticdynamics},
(see also Corollary 4.6. for the simplification of Airy line ensemble
to the Airy process) we have that as $\al\to\infty$ that
\[
\frac{M_{\tilde{L}_{\al},\tilde{R}_{\al}}\left(1;2\al^{1/3}\tau\right)-2\sqrt{u_{\al}(2\al^{1/3}\ta)v_{\al}(2\al^{1/3}\tau)}}{\al^{1/6}}\To\cA_{2}(\tau).
\]
Doing a taylor expansion now for $2\sqrt{u_{\al}(t)v_{\al}(t)}=2\sqrt{\al}\sqrt{1-\frac{\abs t}{\sqrt{\al}}}$
gives
\begin{eqnarray*}
2\sqrt{u_{\al}(2\al^{1/3}\ta)v_{\al}(2\al^{1/3}\tau)} & = & 2\sqrt{\al}-2\al^{1/3}\abs{\tau}-\al^{1/6}\tau^{2}+o(\al^{1/6}).
\end{eqnarray*}
Putting this into the above convergence to $\cA_{2}$ gives the desired
result.\end{proof}
\begin{rem}
The scaling that is needed for the convergence of the top line to
the Airy 2 process here is exactly the same as the scaling that appears
for a family of non-crossing Brownian bridges to converge to the Airy
2 process, see \cite{Corwin_browniangibbs}. This is not entirely
surprising in light of Theorem 3.2, which shows that $M_{\tilde{L}_{\th},\tilde{R}_{\th}}$
is related to a family of non-crossing Poisson arches, and it might
be expected that non-crossing Poisson arches have the same scaling
limit as non-crossing Brownian bridges. In this vein, we conjecture that it is also possible to get a convergence result for the whole ensemble (not just the topmost line) to the multi-line Airy 2 line ensemble, in the sense of weak convergence as a line ensemble introduced in Definition 2.1 of \cite{Corwin_browniangibbs}.
\end{rem}

\section{A discrete limit}
The Poissonized RS process can be realized as the limit of a discrete model created from geometric random variables in a certain scaling limit. This discrete model is a special case of the corner growth model studied in Section 5 of \cite{JoDPP}. We will present the precise construction of the model here, rather than simply citing \cite{JoDPP} in order to present it in a way that makes the connection to the Poissonized RS tableaux more transparent. We also present a different argument yielding the distribution of the model here, again to highlight the connection to the Poissonized RS tableaux. Our proof is very different than the proof from \cite{JoDPP}; it has a much more probabilistic flavor closer to the proof of Theorem \ref{SchurProcessThm}. 
 
  One difference between the discrete model and the Poissonized RS process is due to the possibility of multiple points with the same x-coordinate or y-coordinate. (These events happen with probability 0 for the Poisson point process.) To deal with this we must use Robinson-Schensted-Knuth (RSK) correspondence, which generalizes the RS correspondence to a bijection from generalized permutations to semistandard Young tableau (SSYT). See Section 7.11 in \cite{StanleyVol2} for a reference on the RSK correspondence.

\subsection{Discrete Robinson-Schensted-Knuth process with geometric weights}

\begin{defn}
Fix a parameter $\theta\in\bR_{+}$ and an integer $k \in \bN$. Let $\cL^{\th,k} = \{\th/k,2\th/k \ld \th \}$ be a discretization of the interval $[0,\th]$ with $k$ points. Let $T$ be any semistandard Young tableau (SSYT) whose entries do not exceed $k$. The \textbf{$\cL^{\th,k}$-discretized Young diagram process} for $T$ is a Young diagram valued map $\la^{\th,k}_T:\bR_+ \to \bY$ defined by
\[
\la^{\th,k}_T(t) = \left\{ (i,j) : T(i,j)\frac{\th}{k} \leq t \right\}.
\]
If we are given two such SSYT $L,R$ so that $sh(L) = sh(R)$, we can define the $\cL^{\th,k}$-discretized Young diagram process $\la^{\th,k}_{L,R}: [-\th,\th] \to \bY$ for the pair $(L,R)$ by

\[
\la^{\th,k}_{L,R}(t)= \begin{cases}
\la^{\th,k}_{L}(\th+t) & \ t\leq 0 \\
\la^{\th,k}_{R}(\th-t) & \ t\geq 0
\end{cases}.
\]
\end{defn}

\begin{rem}
\label{discrem}
This definition is analogous to Definition \ref{YDprocess} and Definition \ref{YDprocess_pair}. Comparing with Definition \ref{YDprocess}, we see that in the language of decorated Young tableau, the $\cL^{\th,k}$-discretized Young diagram process corresponds to thinking of decorating the Young tableau with a decoration of $t_{T(i,j)} = T(i,j) \frac{\th}{k}$. For example the SSYT $\young(122,34)$ would be represented graphically as: (compare with Example \ref{YTexample})

\begin{center}
\ytableausetup{centertableaux,boxsize=2.5em} 
\begin{ytableau} 
\stackrel{1}{ \thk } & \stackrel{2}{ 2 \thk} & \stackrel{2}{2 \thk} \\
\stackrel{3}{3 \thk} & \stackrel{4}{4 \thk}
\end{ytableau}.
\end{center}
In other words, the decorations are proportional to the entries in the Young tableau by a constant of proportionality $\slfrac{\th}{k}$. This scaling of the $\cL^{\th,k}$-discretized Young diagram process, despite being a very simple proportionality, will however be important to have convergence to the earlier studied Poissonized RS model in a limit as $k \to \infty$.
\end{rem}

\begin{defn}
Let $\cC_{n}^{\th,k}$ be the set of $n$ point configurations on the lattice $\cL^{\th,k} \times \cL^{\th,k} \subset\bR_{+}^{2}$ where we allow the possibility of multiple points to sit at each site. Since there are only $k^2$ possible locations for the points, one can think of elements of  $\cC_{n}^{\th,k}$ in a natural way as $\bN$-valued $k \times k$ matrices, whose entries sum to $n$:
\[
\cC_{n}^{\th,k} = \left\{ \left\{\pi_{a,b}\right\}_{a,b =1 }^k : \sum_{a,b = 1}^{k} \pi_{a,b} = n \right\}.
\]
Let $\cT_{n}^{\th,k}$ be the set of pairs of semistandard tableaux of size $n$ and of the same shape whose entries do not exceed $k$. This is
\[
\cT_{n}^{\th,k} = \left\{ (L,R) : \sh(L) = \sh(R), \abs{L} = \abs{R} = n, L(a,b) \leq k, R(a,b) \leq k \, \forall a,b \right\}.
\]
The Robinson-Schensted-Knuth (RSK) correspondence is a bijection between $\bN$-valued matrices (or equivalently generalized permutations) and pairs of semistandard tableaux of the same shape. Thinking of $\cC^{\th,k}_n$ as $\bN$-valued matrices, we see more precisely that the RSK correspondence is a bijection between $\cC_{n}^{\th,k}$ and $\cT_{n}^{\th,k}$. Composing this with the definition of the $\cL^{\th,k}$-discretized Young diagram process we have a bijection, which we call the \textbf{ $\cL^{\th,k}$-discretized RSK bijection} between configurations in $\cC_{n}^{\th,k}$ and $\cL^{\th,k}$-discretized Young diagram processes $\la^{\th,k}_{L,R}: [-\th,\th] \to \bY$. We will also use the shorthand $\cC^{\th,k} = \bigcup_n \cC^{\th,k}_n$ and $\cT^{\th,k} = \bigcup_n \cT^{\th,k}_n$.
\end{defn}

\begin{defn}
 Let $\{ \xi_{i,j}: 1\leq i\leq k, 1\leq j \leq k \}$ be an iid collection of  geometric random variables with parameter $\th^2/k^2$. To be precise, each $\xi$ has the following probability mass function:
 
 \[
 \p(\xi = x) = \left(1 - \frac{\th^2}{k^2}\right)\left( \frac{\th^2}{k^2} \right)^x.
 \]
This gives a probability measure on the set of point configurations $\cC^{\th,k}$ by placing exactly $\xi_{i,j}$ points at the location $\left(i \slfrac{\th}{k}, j \slfrac{\th}{k}\right)$. By applying the $\cL^{\th,k}$-discretized RSK bijection, this induces a probability measure on $\cL^{\th,k}$-discretized Young diagram processes.  We refer to the resulting pair of random semistandard tableaux $(L,R)$ as the \textbf{ $\cL^{\th,k}$-geometric weight RSK tableaux}, and we refer to the Young diagram process $\la^{\th,k}_{L,R}$, as the \textbf{ $\cL^{\th,k}$-geometric weight RSK process}.
\end{defn}

\begin{rem}
The word ``geometric weight'' is always in reference to the distribution of the variables $\xi$. This should \emph{not} be confused with the ``Geometric RSK correspondence'', as in \cite{OtherGeoRSK}, which is a different object and in which ``geometric'' refers to a geometric lifting. 
\end{rem}

\begin{rem}
It is possible to construct similar models where the parameter of the geometric random variable used to place particles differs from site to site. For our purposes, however, we will stick to this simple case where all are equal to make the construction and the convergence to the Poissonized RS process as clear as possible. See Section 5 of \cite{JoDPP} for a more general treatment. 
\end{rem}

With this set up, we have the following very close analogue of Theorem \ref{SchurProcessThm} for the $\cL^{\th,k}$-geometric weight RSK process, which characterizes the law of this random object.

\begin{thm}
\label{DiscreteeSchurProcessThm}Fix $\th>0$, $k \in \bN$ and times $t_{1}<\ld<t_{n}\in[-\th,0]$
and $s_{1}<\ld<s_{m}\in[0,\th]$. Suppose we are given an increasing
list of Young diagrams $\la^{(1)}\subset\ld\subset\la^{(n)}$, a decreasing
list of Young diagrams $\mu^{(1)}\supset\ld\supset\mu^{(m)}$ and
a Young diagram $\nu$ with $\nu\supset\la^{(n)}$ and $\nu\supset\mu^{(1)}$.
To simplify the presentation we will use the convention $\la^{(0)}=\emptyset,\la^{(n+1)}=\nu,\mu^{(0)}=\nu,\mu^{(m+1)}=\emptyset$
and $t_{0}=-\th,t_{n+1}=0,s_{0}=0,s_{m+1}=\th$. The geometric weight RSK process $\la^{\th,k}_{L,R}$ has the following finite dimensional distribution:
\begin{eqnarray*}
 &  & \p\left( \bigcap_{i=1}^{n}\left\{ \la^{\th,k}_{L,R}(t_{i})=\la^{(i)}\right\} \bigcap\left\{ \la^{\th,k}_{L,R}(0)=\nu\right\} \bigcap\bigcap_{j=1}^{m}\left\{ \la^{\th,k}_{L,R}(s_{j})=\mu^{(j)}\right\} \right)\\
 & = & \left(1 - \frac{\th^2}{k^2}\right)^{k^2} \left(\frac{\th^2}{k^2}\right)^{\abs{\nu}} \left(\prod_{i=0}^{n} \Dim _{\cL^{\th,k}(t_{i+1},t_{i})} \left(\la^{(i+1)}/\la^{(i)}\right)\right)  \cdot\left(\prod_{i=0}^{m} \Dim _{\cL^{\th,k}(s_{i+1},s_{i})} \left(\mu^{(i)}/\mu^{(i+1)}\right)\right),
\end{eqnarray*}
where $\Dim_k(\la/\mu)$ is the number of SSYT of skew shape $\la/\mu$ and whose entries do not exceed $k$ and $\cL^{\th,k}(x,y) = \abs{\cL^{\th,k}\cap(x,y]}$ is the number of discretization points from $\cL^{\th,k}$ in the interval $(x,y]$.
\end{thm}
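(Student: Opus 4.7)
The plan is to bypass the conditioning-style argument used for Theorem \ref{SchurProcessThm} and work directly with the RSK bijection, exploiting the fact that the geometric weight of a matrix depends only on the sum of its entries. The first observation is a uniform-weight property: since the entries $\{\xi_{i,j}\}$ are iid geometric with parameter $q = \th^2/k^2$, any specific matrix $\{a_{i,j}\} \in \cC^{\th,k}$ has probability $(1-q)^{k^2} q^{\sum a_{i,j}}$, which depends only on the total $N = \sum a_{i,j}$. Through the RSK bijection, each fixed pair $(L_0, R_0) \in \cT^{\th,k}$ is therefore assigned probability $(1-q)^{k^2} q^{\abs{L_0}}$. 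This is the direct analogue of the ``$e^{-\th^2}$ times a dimensional weight'' factorization seen in the Poissonized case.

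Next, I would re-express the joint event. For $t \in [-\th, 0]$, the value $\la^{\th,k}_{L,R}(t)$ is, by definition, the sub-shape of $L$ consisting of those boxes with entry at most $(\th + t)k/\th$; symmetrically for $R$ when $t \in [0, \th]$. Hence the joint event of the theorem factors into (i) a constraint on $L$ alone determined by the left-half times $t_i$, (ii) a constraint on $R$ alone determined by the right-half times $s_j$, and (iii) the shared constraint $\sh(L) = \sh(R) = \nu$, with no residual dependence between $L$ and $R$ beyond the matching of shapes.

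The counting step is the technical heart of the argument. Setting $c_i = (\th + t_i)k/\th \in \{0, 1, \ldots, k\}$ for the left-half levels, I claim the number of SSYT $L$ with $\sh(L) = \nu$ whose sub-shape at level $c_i$ equals $\la^{(i)}$ for every $i$ is exactly $\prod_{i=0}^{n} \Dim_{c_{i+1} - c_i}(\la^{(i+1)}/\la^{(i)})$. The reason is that the cells of $L$ with entries in $(c_i, c_{i+1}]$ form a skew SSYT of shape $\la^{(i+1)}/\la^{(i)}$ whose entries, shifted down by $c_i$, range arbitrarily over $\{1, \ldots, c_{i+1} - c_i\}$; conversely, any such collection of skew SSYTs glues uniquely into a single SSYT with the prescribed intermediate shapes. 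The increment $c_{i+1} - c_i$ is exactly $\cL^{\th,k}(t_i, t_{i+1})$, matching the formula's notation. An identical count applies to $R$.

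Combining the per-pair probability $(1-q)^{k^2} q^{\abs{\nu}}$ with the two independent counts of admissible $L$ and $R$ then yields the theorem's formula immediately. I do not anticipate any serious obstruction; the only care required is in the bookkeeping between the continuous times $t_i, s_j$ and the integer levels $c_i$ on the $\cL^{\th,k}$ lattice, including the boundary levels $c_0 = 0$ and $c_{n+1} = k$ corresponding to $t = -\th, 0$, and similarly on the right half.
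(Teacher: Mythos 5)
Your proof is correct, and it reaches the formula by a more direct route than the paper. The paper recycles the architecture of the Poissonized proof: it conditions on $N=\abs{\la^{\th,k}_{L,R}(0)}$ (a sum of $k^{2}$ geometrics), shows that $(L,R)$ is uniform on $\cT_{n}^{\th,k}$ given $\{N=n\}$, computes $\p\left(\la^{\th,k}_{L,R}(0)=\nu\right)$ as a separate corollary, and then evaluates the conditional probability of the intermediate shapes given the shape at time $0$, invoking conditional independence of $L$ and $R$ under that conditioning. You collapse this chain by observing that the RSK preimage of any fixed pair $(L_{0},R_{0})$ carries probability $(1-q)^{k^{2}}q^{\abs{L_{0}}}$ outright, so the target probability is simply this common weight times the number of admissible pairs; the conditioning scaffolding, which in the Poissonized case is genuinely needed to decouple the decorations from the permutation, is dispensable here precisely because the decorations are deterministic functions of the tableau entries (a point the paper itself makes in a remark). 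Both arguments ultimately rest on the same two facts --- the geometric weight of a matrix depends only on its entry sum, and the SSYT of shape $\nu$ with prescribed sub-shapes at levels $c_{0}<\cdots<c_{n+1}$ are counted by $\prod_{i}\Dim_{c_{i+1}-c_{i}}\left(\la^{(i+1)}/\la^{(i)}\right)$ via the cut-and-glue decomposition into skew pieces --- and your counting step coincides with the paper's. The one point to tighten is that $c_{i}=(\th+t_{i})k/\th$ need not be an integer for arbitrary $t_{i}\in[-\th,0]$; you should take $c_{i}=\floor{(\th+t_{i})k/\th}$, after which $c_{i+1}-c_{i}$ agrees with the lattice-point count in the statement, as your closing remark anticipates.
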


\begin{rem}
\label{DicreteSchurProcessRem}
The above theorem is purely combinatorial in terms
of $\Dim_j\left(\la/\mu\right)$, which is the enumerating semistandard Young
tableaux. As was the case for Theorem \ref{SchurProcessThm}, this can be written in a very nice way using Schur functions and specializations. Let $\sigma^{\th,k}_{x,y}$ to be the specialization that specializes the first $\cL^{\th,k}_{(x,y)}$ variables to $\th/m$ and the rest to zero. Namely:
\[
f(\sigma^{\th,k}_{x,y}) = f\left(\underbrace{\frac{\th}{k},\frac{\th}{k},\ld,\frac{\th}{k}}_{ \cL^{\th,k}_{(x,y)}}, 0, 0, \ld \right).
\]
This specialization differs by a constant factor from the so called ``principle specialization'', see Section 7.8 of \cite{StanleyVol2}. It is an example of a ``finite length'' specialization as defined in Section 2.2.1 in \cite{MacProc}. One has the identity:
\[
s_{\la/\mu}\left(\sigma^{\th,k}_{x,y}\right)= \left(\frac{\th}{k}\right)^{\abs{\la/\mu}} \Dim_{\cL^{\th,k}_{(x,y)}} (\la/\mu).
\]
Plugging this into the above theorem, after some very nice telescoping cancellations, we can rewrite the probability as a chain of Schur functions:
\begin{eqnarray*}
 &  & \p\left(\bigcap_{i=1}^{n}\left\{ \la^{\th,k}_{L,R}(t_{i})=\la^{(i)}\right\} \bigcap\left\{ \la^{\th,k}_{L,R}(0)=\nu\right\} \bigcap\bigcap_{j=1}^{m}\left\{ \la^{\th,k}_{L,R}(s_{j})=\mu^{(j)}\right\} \right)\\
 & = & \left(1 - \frac{\th^2}{k^2}\right)^{k^2} \left(\prod_{i=0}^{n}s_{\la^{(i+1)}/\la^{(i)}}\left(\sigma^{\theta,N}_{t_{i+1},t_{i}}\right)\right)\cdot\left(\prod_{i=0}^{m}s_{\mu^{(i)}/\mu^{(i+1)}}\left(\sigma^{\theta,N}_{s_{i+1},s_{i}}\right)\right).
\end{eqnarray*}
\end{rem}
\begin{rem}
For fixed $\th$, one might notice that the normalizing prefactor of the geometric weight RSK process $(1-\th^2/k^2)^{k^2}$ converges as $k\to \infty$ to  $e^{-\th^2}$, the normalizing prefactor of the Poissonized RS process. Even more remarkably, the specializations $\sigma^{\th,k}_{x,y}$ that appear in Remark \ref{DicreteSchurProcessRem}, converge to the specialization $\rho_{y - x}$ that appear in Remark \ref{ShurProcRem} in the sense that for any symmetric function $f$, one has that
\[
\lim_{N \to \infty} f(\sigma^{\th,N}_{y,x}) = f(\rho_{y-x}). 
\]
One can verify this convergence by checking the effect of the specialization on the basis $p_\la$ of power sum symmetric functions. These have
\[
p_\la(\rho_{y-x}) =\begin{cases}
(y-x)^n & \ \la = (\underbrace{1,1,\ld,1}_n) \\
0 & \ \text{otherwise}
\end{cases},
\]
and for $\sigma^{\th,N}_{x,y}$, we have 
\[
p_{\la}(\sigma^{\th,N}_{x,y}) = \left(\cL^{\th,k}_{(x,y)}\right)^{\ell(\la)}\cdot \left( \frac{\th}{k} \right)^{\abs{\la}}.
\]
(Here $\ell(\la)$ is the number of rows of $\la$.) Using the bound, $\floor{(y-x)k/\th} \leq \cL^{\th,k}_{(x,y)} \leq \ceil{(y-x)k/\th}$, we know that $\cL^{\th,k}_{(x,y)}$ differs from $(y-x)k/\th$ by no more than one. Since $\ell(\la) \leq \abs{\la}$ holds for any Young diagram, the above converges to $0$ as $k \to \infty$ unless $\ell(\la) = \abs{\la}$. This only happens if $\la = (\underbrace{1,1,\ld,1}_n)$ is a single vertical column and in this case we get exactly a limit of $(y-x)^n$ as $k \to \infty$, which agrees with $p_\la(\rho_{y-x})$. See section 7.8 of \cite{StanleyVol2} for these formulas. This convergence of finite length specializations to Plancherel specializations is also mentioned in Section 2.2.1. of \cite{MacProc}.

This observations shows us that the finite dimensional distributions of the geometric weight RSK process converge to the finite dimensional distributions of the Poissonized RS process in the limit $k \to \infty$. One might have expected this convergence since the point process of geometric points from which the geometric weight RSK process is built convergences in the limit $k \to \infty$ to a Poisson point process of rate 1 in the square $[0,\th]\times[0,\th]$, from which the Poissonized RS process is built. However, since the decorated RS correspondence can be very sensitive to moving points even very slightly, it is not apriori clear that convergence of point processes in general always leads to convergence at the level of Young diagram processes. 
\end{rem}

\subsection{Proof of Theorem \ref{DiscreteeSchurProcessThm}}

The proof follows by similar methods to the proof of Theorem \ref{SchurProcessThm}. We prove some intermediate results which are the analogues of Lemma \ref{conditioninglemma}, Corollary \ref{YTcor} and Lemma \ref{sizelemma}.

\begin{lem}
Let $N=\abs{\la^{\th,k}_{L,R}(0)}$.
Then $N$ has the distribution of the sum of $k^2$ i.i.d. geometric random variable of parameter $\th^2 / k^2$. Moreover, conditioned on the event $\left\{ N=n\right\} $ the pair $(L,R)$ is uniformly distributed in the set $\cT^{k,\th}_n$.
\end{lem}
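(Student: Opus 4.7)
The plan is to verify both claims by direct inspection of the probability mass function of the configuration, leveraging the fact that the $\cL^{\th,k}$-discretized RSK bijection preserves the total number of points.

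First I would handle the distribution of $N$. By construction of the $\cL^{\th,k}$-discretized RSK bijection, $\sh(L) = \sh(R)$ has size equal to the total number of points in the underlying configuration. The point configuration is built by placing $\xi_{a,b}$ points at the site $(a\th/k, b\th/k)$, so the total number of points equals $\sum_{a,b=1}^{k} \xi_{a,b}$. Since the $\xi_{a,b}$ are i.i.d.\ geometric of parameter $\th^2/k^2$ by definition, $N$ is indeed the sum of $k^2$ such random variables.

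Next I would handle uniformity. The key observation is that the joint probability mass function of the configuration $\pi = \{\pi_{a,b}\}_{a,b=1}^k \in \cC^{\th,k}$ factors nicely:
\[
\p\bigl(\text{configuration} = \pi\bigr) = \prod_{a,b=1}^{k} \left(1 - \frac{\th^2}{k^2}\right)\left(\frac{\th^2}{k^2}\right)^{\pi_{a,b}} = \left(1 - \frac{\th^2}{k^2}\right)^{k^2} \left(\frac{\th^2}{k^2}\right)^{\abs{\pi}},
\]
where $\abs{\pi} = \sum_{a,b} \pi_{a,b}$. This expression depends on $\pi$ only through the total size $\abs{\pi}$, so conditional on $\{N = n\}$ the configuration is uniformly distributed over $\cC^{\th,k}_n$.

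The conclusion then follows because the $\cL^{\th,k}$-discretized RSK bijection maps $\cC^{\th,k}_n$ bijectively onto $\cT^{\th,k}_n$, transporting the uniform measure on configurations to the uniform measure on pairs $(L,R) \in \cT^{\th,k}_n$. There is no real obstacle here: the argument is a direct analogue of Lemma \ref{conditioninglemma}, with the role of ``Poisson points in $[0,\th]\times[0,\th]$ are uniform conditional on their count'' played by the elementary fact that the product geometric measure assigns equal mass to every configuration with a fixed total count.
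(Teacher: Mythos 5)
Your proof is correct and follows essentially the same route as the paper: identify $N$ with the sum of the i.i.d.\ geometric variables, observe that the product geometric measure assigns to each configuration a mass depending only on the total count (hence uniformity on $\cC^{\th,k}_n$ after conditioning), and push forward through the RSK bijection. Nothing to add.
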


\begin{proof}

This is analogous to Lemma \ref{sizelemma} In this case, $N = \sum_{i,j=1}^{k} \xi_{i,j}$ is the sum of geometric random variables.

The fact that all elements of $\cT_{n}^{k,\th}$ are equally likely in the conditioning $\{ N = n \}$ is because of the following remarkable fact about geometric distributions. For a collection of iid geometric random variables, the probability of any configuration $\bigcap_{i,j = 1}^{k}\left\{ \xi_{i,j} = x_{i,j} \right\}$ depends only on the \emph{sum} $\sum_{i,j = 1}^k x_{i,j}$. Indeed, when $p$ is the parameter for the geometric random variables, the probability is:

\[
\p\left(\bigcap_{i,j = 1}^{k}\left\{ \xi_{i,j} = x_{i,j} \right\}\right) = p^{\sum_{i,j = 1}^k x_{i,j}}(1-p)^{k^2}.
\]
Since this depends only on the sum, and not any other detail of the $x_{i,j}$, when one conditions on the sum, all the configurations are equally likely. Since the RSK is a bijection, it pushes forward the uniform distribution on $\cC^{\th,k}_n$ to a uniform distribution on $\cT^{\th,k}_n$ as desired.
\end{proof}

\begin{rem}
This remarkable fact about geometric random variables is the analogue of the fact that the points of a Poisson point process are uniformly distributed when one conditions on the total number of points. This was a cornerstone of Lemma \ref{conditioninglemma}. This special property of geometric random variables is what makes this distribution so amenable to analysis: see Lemma 2.2. in the seminal paper by Johansson \cite{Johansson338249} where this exact property is used.
\end{rem} 

\begin{cor}
For any Young diagram $\nu$, we have: 
\begin{eqnarray*}
\p\left(\la^{\th,k}_{L,R}(0)=\nu\right)& = &\left(\frac{\Dim_{k}(\nu)^{2}}{\binom{k^2 + \abs{\nu} - 1}{k^2} }\right)\p\left(N=\abs{\nu}\right) \\
& =& \left(1 - \frac{\th^2}{k^2}\right)^{k^2} \left(\frac{\th^2}{k^2}\right)^{\abs{\nu}} \Dim_{k}(\nu)^{2}.
\end{eqnarray*}
\end{cor}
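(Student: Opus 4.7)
The strategy is to mirror the proof of Corollary \ref{YTcor} verbatim, replacing Lemma \ref{conditioninglemma} with the immediately preceding lemma on conditional uniformity of $(L,R)$. Specifically, since $\la^{\th,k}_{L,R}(0)$ is by construction the common shape of $L$ and $R$, the event $\{\la^{\th,k}_{L,R}(0) = \nu\}$ equals $\{N = \abs{\nu}\} \cap \{\sh(L) = \nu\}$, and so by conditioning I can write
\[
\p\left(\la^{\th,k}_{L,R}(0) = \nu\right) = \p\left(\sh(L) = \nu \mid N = \abs{\nu}\right) \cdot \p\left(N = \abs{\nu}\right).
\]

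For the conditional probability, the preceding lemma asserts that given $\{N = n\}$, the pair $(L,R)$ is uniformly distributed on $\cT^{\th,k}_n$. The number of pairs with common shape $\nu$ is $\Dim_k(\nu)^2$ by the definition of $\Dim_k$. For the total $\abs{\cT^{\th,k}_n}$, I use the RSK bijection to identify $\cT^{\th,k}_n$ with $\cC^{\th,k}_n$, which in turn is in bijection with $\bN$-valued $k \times k$ matrices whose entries sum to $n$; by the standard stars-and-bars count this gives $\binom{n + k^2 - 1}{k^2 - 1}$ elements. This yields the first equality (up to the binomial index convention).

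For the marginal distribution of $N$, I invoke the first part of the preceding lemma: $N$ is a sum of $k^2$ i.i.d.\ geometric random variables with parameter $p = \th^2/k^2$, hence follows the negative binomial distribution
\[
\p(N = n) = \binom{n + k^2 - 1}{k^2 - 1} \left(\frac{\th^2}{k^2}\right)^n \left(1 - \frac{\th^2}{k^2}\right)^{k^2}.
\]
Substituting $n = \abs{\nu}$ and multiplying, the binomial coefficients cancel and I recover the clean closed form $\left(1 - \th^2/k^2\right)^{k^2}(\th^2/k^2)^{\abs{\nu}} \Dim_k(\nu)^2$ in the second equality. The argument is entirely routine given the preceding lemma; the only care needed is in correctly enumerating $\abs{\cT^{\th,k}_n}$ via RSK and stars-and-bars, so there is no substantive obstacle.
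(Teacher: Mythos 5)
Your proposal is correct and follows essentially the same route as the paper: condition on $\{N=\abs{\nu}\}$, invoke the conditional uniformity of $(L,R)$ on $\cT^{\th,k}_n$, count pairs of shape $\nu$ by $\Dim_k(\nu)^2$ against the stars-and-bars total, and cancel against the negative binomial mass function of $N$. Your index $\binom{n+k^2-1}{k^2-1}$ is in fact the correct stars-and-bars count (the paper writes $\binom{k^2+\abs{\nu}-1}{k^2}$ in both places, so the discrepancy cancels and does not affect the final formula).
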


\begin{proof}
This is analogous to the proof of Corollary \ref{YTcor}. The only difference is that $\cT^{\th,k}_n$ contains pairs of semi-standard with entries no larger than $k$, of which we are interested in the number of pairs of shape $\nu$. This is exactly what $\Dim_{k}(\nu)$ enumerates. $\abs{ \cC^{\th,k}_n} = \abs{\cT^{\th,k}_n} = \binom{k^2 + \abs{\nu} - 1}{k^2}$ is the number of elements in $\cT^{\th,k}_n$, so it appears as a normalizing factor. In this case, since $N$ has the distribution of the sum of $k^2$ geometric random variables, we can simplify using the probability mass function:
\[
\p\left( N = x \right) = \binom{k^2 + x- 1}{k^2}\left(\frac{\th^2}{k^2}\right)^x\left(1-\frac{\th^2}{k^2}\right)^{k^2}.
\]
\end{proof}

\begin{lem}
We have
\[
\p\left( \bigcap_{i=0}^{n+1} \left\{ \la^{\th,k}_{L,R}(t_{i})=\la^{(i)} \right\} \given{  \la^{\th,k}_{L,R}(0)=\nu }  \right) = \frac{\prod_{i=1}^{n+1}\Dim_{\cL^{\th,k}_{(t_{i+1},t_{i})} \left(\la^{(i+1)}/\la^{(i)}\right)}}{\Dim_{k}(\nu)}.
\]
An analogous formula holds for $\p\left(\bigcap_{i=0}^{m+1}\left\{ \la^{\th,k}_{L,R}(s_{i})=\mu^{(i)}\right\} \given{  \la^{\th,k}_{L,R}(0)=\nu }\right)$. Moreover, we have the same type of conditional independence as from Lemma \ref{shapelem} between times $t > 0$ and times $t < 0$ when we condition on the event $\left\{ \la^{\th,k}_{L,R}(0)=\nu \right\}$. 
\end{lem}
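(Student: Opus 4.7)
The approach follows the same pattern as the proofs of Lemma \ref{sizelemma} and Lemma \ref{shapelem}, but is simpler because in the SSYT setting the entries of the tableaux \emph{are} the decorations (up to the proportionality constant $\th/k$, as explained in Remark \ref{discrem}), so there is no need to separately track sizes and shapes. The plan is to reduce the calculation, via conditioning, to counting SSYT of shape $\nu$ with prescribed ``layer decomposition'' given by the $\la^{(i)}$.

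First I would observe that $\{\la^{\th,k}_{L,R}(0)=\nu\} = \{N = |\nu|\} \cap \{\sh(L)=\sh(R)=\nu\}$. By the preceding lemma, conditioned on $\{N=n\}$ the pair $(L,R)$ is uniform on $\cT^{\th,k}_n$. Intersecting this uniform distribution with the shape constraint and using the obvious product structure, the conditional distribution of $(L,R)$ given $\{\la^{\th,k}_{L,R}(0)=\nu\}$ is the product of two independent uniform distributions on SSYT of shape $\nu$ with entries in $\{1,\ldots,k\}$. This immediately yields the conditional independence assertion, because for $t \in [-\th,0]$ the event $\{\la^{\th,k}_{L,R}(t)=\la\}$ is a function of $L$ alone, while for $s \in [0,\th]$ the event $\{\la^{\th,k}_{L,R}(s)=\mu\}$ is a function of $R$ alone.

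Next I would compute the $t$-marginal by pure enumeration. Setting $M_i := |\cL^{\th,k}\cap[0,t_i+\th]|$, the definition of the discretized process gives
\[
\{(a,b)\in \la^{\th,k}_{L,R}(t_i)\} = \{L(a,b)\le M_i\},
\]
so the event in question is
\[
\bigcap_{i=0}^{n+1}\{\text{the sub-tableau } \{L\le M_i\} \text{ has shape } \la^{(i)}\}.
\]
This is equivalent to demanding that, for each $i=0,\ldots,n$, the boxes of $L$ with entries in $\{M_i+1,\ldots,M_{i+1}\}$ occupy the skew region $\la^{(i+1)}/\la^{(i)}$ and that their restriction forms an SSYT on that skew shape. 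This is exactly the standard ``layer decomposition'' bijection for semistandard tableaux, so the number of valid $L$ factors as
\[
\prod_{i=0}^{n} \Dim_{M_{i+1}-M_i}\!\left(\la^{(i+1)}/\la^{(i)}\right) = \prod_{i=0}^{n} \Dim_{\cL^{\th,k}(t_i,t_{i+1})}\!\left(\la^{(i+1)}/\la^{(i)}\right),
\]
since $M_{i+1}-M_i$ is precisely the number of lattice points of $\cL^{\th,k}$ in the interval corresponding to $(t_i,t_{i+1}]$, and a SSYT with entries in a consecutive range of $j$ integers is in bijection with an SSYT with entries in $\{1,\ldots,j\}$ via a shift. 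Dividing this count by $\Dim_k(\nu)$, the total number of SSYT of shape $\nu$ with entries bounded by $k$ (which is the size of the set on which $L$ is uniform), gives the desired formula. The analogous calculation for $R$ handles the $s$-times.

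The main obstacle is a bookkeeping one: carefully matching the real-valued thresholds $t_i + \th$ to integer cutoffs $M_i$ in the entry set $\{1,\ldots,k\}$ of the SSYT, and verifying that the floor operations combine correctly to produce the quantity $\cL^{\th,k}(t_i,t_{i+1})$ stated in the theorem. Once this identification is in place, the layer-decomposition factorization is classical and requires no further work.
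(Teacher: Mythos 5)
Your proposal is correct and follows essentially the same route as the paper: condition on $\{\la^{\th,k}_{L,R}(0)=\nu\}$ to get $(L,R)$ uniform and independent on SSYT of shape $\nu$ (which gives the conditional independence for free), then count tableaux with prescribed intermediate shapes via the layer decomposition and the shift bijection onto $\{1,\ldots,\cL^{\th,k}(t_i,t_{i+1})\}$. The paper's own write-up merely defers the counting to the argument of Lemma \ref{shapelem}, so your version is the same proof with the bookkeeping made explicit.
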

\begin{proof}
This is the analogue of Lemma \ref{shapelem}. The proof proceeds in the same way with the important observation that, when conditioned on $\left\{ \la^{\th,k}_{L,R}(0)=\nu \right\}$, the pair of SSYT $(L,R)$ is uniformly chosen from the set of pairs of shape $\nu$ from $\cT^{\th,k}_n$. This set is the Cartesian product of the set of all such SSYT of shape $\nu$ with itself. Hence, the two SSYT are independent and are both uniformly distributed among the set of all SSYT of shape $\nu$ in this conditioning. For this reason, it suffices to count the number of SSYT of shape $\nu$ with the correct intermediate shapes at times $t_1,\ld,t_n$. The counting of these SSYT then follows by the same type of argument as Lemma \ref{shapelem}. Each intermediate SSYT of shape $\mu^{i+1}/\mu^{i}$ must be filled with entries from the interval $\cL^{\th,k}\cap (t_i,t_{i+1}]$ in order for the resulting SSYT to have the correct subshapes. Since we are only interested in the number of such SSYT, counting those with entries between 1 and $\cL^{\th,k}_{t_{i+1},t_{i}}$ will do. This is precisely what $\Dim_{\cL^{\th,k}(t_{i+1},t_{i})} \left(\la^{(i+1)}/\la^{(i)}\right)$ enumerates.
\end{proof}

\begin{rem}
In the proof of Theorem \ref{SchurProcessThm}, there were additional lemmas needed to separate the dependence of the decorations and the entries appearing in the Young diagrams. As explained in Remark \ref{discrem},the discrete geometric weight RSK tableaux case is simpler in this respect because the decorations are proportional to the entries in the tableaux by a factor of $\th/k$.
\end{rem}

\begin{proof} (Of Theorem \ref{DiscreteeSchurProcessThm})
Exactly as in the proof of Theorem \ref{SchurProcessThm}, the proof follows by combining the lemmas.
\end{proof}

\textbf{Acknowledgments}
The author extends many thanks to G{\' e}rard Ben Arous for early encouragement on this subject and to Ivan Corwin for his friendly support and helpful discussions, in particular pointing out the connections that led to the development of Section 5. The author was partially supported by NSF grant DMS-1209165.

\newpage

\bibliographystyle{plain}
\bibliography{biblio3}

\end{document}